\def\Z{\mathbb{Z}}
\def\Q{\mathbb{Q}}
\def\R{\mathbb{R}}
\def\H{\mathbb{H}}
\def\C{\mathbb{C}}
\def\bh{\bm H}
\def\sQ{\mathcal{Q}}
\def\fQ{\mathscr{Q}}
\def\sM{\mathcal{M}}
\def\sW{\mathcal{W}}
\DeclareMathOperator{\im}{Im}
\DeclareMathOperator{\re}{Re}
\DeclareMathOperator{\spt}{spt}
\DeclareMathOperator{\Tr}{Tr}
\DeclareMathOperator{\Mp}{Mp}
\def\SL{{\rm SL}}
\def\PSL{{\rm PSL}}
\def\CT{{\rm CT}}
\newcommand{\pfrac}[2]{\left(\frac{#1}{#2}\right)}
\newcommand{\pmfrac}[2]{\left(\mfrac{#1}{#2}\right)}
\newcommand{\pMatrix}[4]{\left(\begin{matrix}#1 & #2 \\ #3 & #4\end{matrix}\right)}
\renewcommand{\pmatrix}[4]{\left(\begin{smallmatrix}#1 & #2 \\ #3 & #4\end{smallmatrix}\right)}
\renewcommand{\bar}[1]{\overline{#1}}
\DeclareMathOperator{\sgn}{sgn}
\newtheorem{theorem}{Theorem}
\newtheorem{lemma}[theorem]{Lemma}
\newtheorem{proposition}[theorem]{Proposition}
\theoremstyle{remark}
\numberwithin{equation}{section}
\begin{document}

\title{A polyharmonic Maass form of depth $3/2$ for $\SL_2(\Z)$}
\date{\today}

\author{Scott Ahlgren}
\address{Department of Mathematics\\
University of Illinois\\
Urbana, IL 61801}
\email{sahlgren@illinois.edu}

\author{Nickolas Andersen}
\address{Department of Mathematics\\
UCLA\\
Los Angeles, CA 90095}
\email{nandersen@math.ucla.edu}

\author{Detchat Samart}
\address{Department of Mathematics\\
Burapha University\\
Chonburi, 20131 Thailand}
\email{petesamart@gmail.com}
\subjclass[2010]{11F37, 11F30}

\thanks{The first author was  supported by a grant from the Simons Foundation (\#426145 to Scott Ahlgren).
The second author was supported by NSF grant DMS-1701638.}

\begin{abstract}
Duke, Imamo\=glu, and T\'oth constructed a polyharmonic Maass form of level $4$ whose Fourier coefficients
encode real quadratic class numbers. A more general construction of such forms was subsequently given by Bruinier, Funke, and Imamo\=glu.
Here we give a direct construction of such a form for the full modular group and study the properties of its coefficients.
We give interpretations of the  coefficients of the holomorphic parts of each of these polyharmonic Maass forms as inner products of certain weakly holomorphic modular forms
and harmonic Maass forms.  The coefficients of square index are particularly intractable; in order to address these, we develop various extensions of the
usual normalized Peterson inner product using a strategy of Bringmann, Ehlen and Diamantis.
\end{abstract}

\maketitle

\section{Introduction}
We begin by discussing a polyharmonic Maass form of level $4$.
For $n\geq 0$, let $H(n)$ denote the  Hurwitz class number.  We have $H(0)=-1/12$ and $H(n)=0$ for $n\equiv 1,2\pmod 4$.  Otherwise $H(n)$ is the number of positive definite
quadratic forms of discriminant $-n$, counted with multiplicity equal to the inverse of the order of their stabilizer in $\SL_2(\Z)$.

Zagier \cite{zagier-class-numbers} introduced the first example of what is known as a harmonic Maass form.
For $y>0$ let $\beta_k(y)$ denote the normalized incomplete gamma function
\begin{equation}\label{eq:betadef}
	\beta_k(y) := \frac{\Gamma(1-k, y)}{\Gamma(1-k)} = \frac{y^{1-k}}{\Gamma(1-k)} \int_{1}^{\infty} t^{-k} e^{- yt} \, dt.
\end{equation}
Zagier defined the function
\begin{equation}\label{eq:zagform}
	\widehat{\bm Z}_-(\tau) := \sum_{n\geq0} H(n) q^{n} + \frac{1}{8\pi\sqrt y} - \frac{1}{4}  \sum_{n\neq 0} |n| \beta_\frac32(4\pi n^2 y) q^{-n^2}
\end{equation}
(we use the notation $\widehat{\bm Z}_-$ to follow the notation of
Duke, Imamo\=glu, and T\'oth  \cite{DIT:CycleIntegrals}).
Here, and throughout, $\tau=x+iy$ and $q=e(\tau)=e^{2\pi i\tau}$.
Zagier showed that the function $\widehat{\bm Z}_-(\tau)$ transforms like a modular form of weight $3/2$ on $\Gamma_0(4)$  and that
\[
\xi_{\frac 32}\widehat{\bm Z}_-=-\frac1{16\pi}\theta,
\]
where
\begin{equation}
	\xi_k := 2iy^k \overline{\frac{\partial}{\partial \bar\tau}}
\end{equation}
and $\theta(\tau) := \sum_{n\in \Z} q^{n^2}$ is the usual theta function.

Suppose that $d$ is a non-square discriminant.  If $d<0$ then
let $\omega_d$ be half the number of roots of unity in $\Q(\sqrt d)$, and if $d>0$
let $\varepsilon_d$ be the fundamental unit in $\Q(\sqrt d)$.
Following \cite[\S 2]{Duke:2011a}, define the regulator
\begin{equation}
R(d):=\begin{cases} 2\,\pi \omega_d^{-1}\quad&\text{if $d<0$},\\
2\log \varepsilon_d\quad&\text{if $d>0$, $d\neq\square$},
\end{cases}
\end{equation}
and  the general Hurwitz function
\begin{equation}\label{eq:hurwitz}
h^*(d):=\frac1{2\pi} \sum_{\ell^2\mid d} R\pmfrac d{\ell^2}h\pmfrac d{\ell^2},
\end{equation}
where $h(d)$ is the class number.
Note that for $d<0$ we have $h^*(d)=H(|d|)$.
Define
\begin{equation}\label{eq:alphadef}
	\alpha(y) := \frac{\sqrt y}{4\pi}  \int_0^\infty e^{-\pi y t}t^{-\frac 12} \log(1+t) \, dt.
\end{equation}
Duke, Imamo\=glu, and T\'oth \cite[Theorem 4]{DIT:CycleIntegrals} (see also \cite[(4.2)]{Duke:2011a})  showed that there is a nonholomorphic modular form of weight $\frac 12$ on $\Gamma_0(4)$ whose Fourier expansion is
\begin{multline}\label{eq:zhatplus}
	\widehat{\bm Z}_+(\tau)  := \sum_{d>0, \ d\neq \square} \frac{h^*(d)}{\sqrt d} q^d  + \sum_{n>0}a(n^2)q^{n^2} + \frac{\sqrt y}{3} + \sum_{d<0} \frac{h^*(d)}{\sqrt{|d|}} \beta_\frac12(4\pi|d|y) q^{d} \\*
	- \mfrac 1{4\pi} \log y + \sum_{n\neq 0} \alpha(4n^2 y) q^{n^2} -\mfrac 1\pi \left( \mfrac{\zeta'(2)}{\zeta(2)} - \gamma +  \log 4 \right)
\end{multline}
and for which
\[
	\xi_\frac12 \widehat{\bm Z}_+=-2\widehat{\bm Z}_-.
\]
Here $\gamma$ denotes Euler's constant, and we have corrected the value of the constant term using (5.4) and (2.24) of \cite{DIT:CycleIntegrals}.

The form $\widehat{\bm Z}_+(\tau)$ is defined through a limit of Poincar\'e series \cite[(5.4)]{DIT:CycleIntegrals}. The coefficients $a(n^2)$ are particularly intractable since they correspond to poles of the Poincar\'e series, and they are not determined in \cite{DIT:CycleIntegrals}.

Bruinier, Funke, and Imamo\=glu \cite{Bruinier:2011} introduced a general regularized theta lift which lifts weak Maass forms of weight zero to polyharmonic Maass forms of weight $1/2$.  By polyharmonic we mean that the form is annihilated by repeated application of the operators
$\xi_k$; a precise definition is in  Section~\ref{sec:results}.
Applying this lift to the constant function~$1$ produces  a function ${\bm Z}(\tau)$ which differs from  $\widehat{\bm Z}_+(\tau)$
by a constant multiple of $\theta(\tau)$,
and which  provides an interpretation of the mysterious coefficients of square index.
After some computation using Theorem~4.2 and Remark~3.4 of \cite{Bruinier:2011}
one can describe this form in such a way that every
non-trivial coefficient has an interpretation in terms of the general Hurwitz function.

To state this result, we extend the definition of $R(d)$ by setting
\begin{equation}
R(d) := 2\log \sqrt d \qquad \text{if  $d=\square$},
\end{equation}
and we define $h^*(d)$ via \eqref{eq:hurwitz}.
Then the work of Bruinier, Funke, and Imamo\=glu implies the following.
We note that there are a few typos in \cite[Theorem~4.2]{Bruinier:2011}; details and a sketch of the computation which produces the following
 result are given in
Section~\ref{sec:DIT}  below.

\begin{theorem}\label{thm:BFI}
There is a polyharmonic Maass form of weight $1/2$ and depth $3/2$ on $\Gamma_0(4)$ whose Fourier expansion is
\begin{equation}\label{eq:BFI}
	{\bm Z}(\tau) := \sum_{d>0} \frac{h^*(d)}{\sqrt d} q^d  + \frac{\sqrt y}{3} + \sum_{d<0} \frac{h^*(d)}{\sqrt{|d|}} \beta_\frac12(4\pi|d|y) q^{d}
	  + \frac{\gamma - \log (16\pi y)}{4\pi} + \sum_{n\neq 0} \alpha(4n^2 y) q^{n^2}
\end{equation}
and for which
\[\xi_\frac12 \bm Z=-2\widehat{\bm Z}_-.\]
\end{theorem}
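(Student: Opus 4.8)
The plan is to realize ${\bm Z}(\tau)$ directly as the Bruinier--Funke--Imamo\=glu regularized theta lift \cite{Bruinier:2011} of the constant function $1$, regarded as a weak Maass form of weight $0$ on $\Gamma_0(4)$, and to read off its Fourier expansion from \cite[Theorem~4.2]{Bruinier:2011}. That theorem expresses the coefficients of the lift in terms of traces of the input over the CM cycles of discriminant $d<0$ and over the geodesic cycles of discriminant $d>0$, together with archimedean special functions and an extra contribution coming from the failure of the input to be cuspidal. The modularity, polyharmonicity, and depth $3/2$ are inherited for free from the general structure of the lift, so the real work is the \emph{explicit} specialization of each ingredient to $f\equiv 1$, keeping careful track of the typographical corrections to \cite[Theorem~4.2]{Bruinier:2011} recorded in Section~\ref{sec:DIT}.

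The next step is to evaluate the cycle traces of the constant function. For $f\equiv1$ the trace over the discriminant-$d$ cycle collapses to a weighted count of the relevant binary quadratic forms: for $d<0$ each CM point contributes the reciprocal of the order of its stabilizer, while for $d>0$ each geodesic contributes its regulator. Summing over $\SL_2(\Z)$-classes and over the divisors $\ell^2\mid d$ reproduces exactly the general Hurwitz function $h^*(d)$ of \eqref{eq:hurwitz}, with the square case $d=\square$ folded in once the convention $R(d)=2\log\sqrt d$ is adopted. This yields the holomorphic coefficients $h^*(d)/\sqrt d$ for $d>0$ and the nonholomorphic coefficients $h^*(d)\,\beta_{\frac12}(4\pi|d|y)/\sqrt{|d|}$ for $d<0$ in \eqref{eq:BFI}.

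The step I expect to be the main obstacle is the archimedean bookkeeping: matching the incomplete-gamma factor $\beta_{\frac12}(4\pi|d|y)$ for $d<0$, the square-index terms $\alpha(4n^2y)$ of \eqref{eq:alphadef}, and the combined constant/growth term $\bigl(\gamma-\log(16\pi y)\bigr)/4\pi$. These arise from the regularized integrals at the cusp and demand a careful evaluation of the archimedean transforms in \cite[Theorem~4.2]{Bruinier:2011}; it is precisely here that the typos must be corrected, and the resulting constant must be reconciled with the value $-\tfrac1\pi\bigl(\zeta'(2)/\zeta(2)-\gamma+\log 4\bigr)$ appearing in \eqref{eq:zhatplus}. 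Completing this computation establishes the asserted expansion \eqref{eq:BFI}.

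Finally, for the relation $\xi_{\frac12}{\bm Z}=-2\widehat{\bm Z}_-$ I would compare ${\bm Z}$ with the Duke--Imamo\=glu--T\'oth form $\widehat{\bm Z}_+$. By inspection of \eqref{eq:BFI} and \eqref{eq:zhatplus} the two forms have identical nonholomorphic parts, since the $\sqrt y/3$, the $\beta_{\frac12}$, the $-\log y/4\pi$, and the $\alpha$ terms all coincide; hence their difference is a \emph{holomorphic} modular form of weight $1/2$ on $\Gamma_0(4)$ whose $q$-expansion is supported on squares. That space is one-dimensional and spanned by $\theta$, so ${\bm Z}=\widehat{\bm Z}_++c\,\theta$ for a constant $c$, which simultaneously pins down the previously undetermined coefficients via $a(n^2)=h^*(n^2)/n-2c$. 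Since $\xi_{\frac12}\theta=0$ and $\xi_{\frac12}\widehat{\bm Z}_+=-2\widehat{\bm Z}_-$, the desired identity follows immediately, and the depth-$3/2$ claim is corroborated by the chain $\xi_{\frac12}{\bm Z}=-2\widehat{\bm Z}_-$, $\xi_{\frac32}\widehat{\bm Z}_-=-\tfrac1{16\pi}\theta$, $\xi_{\frac12}\theta=0$.
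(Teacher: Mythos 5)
Your proposal is correct and follows essentially the same route as the paper's own justification: Theorem~\ref{thm:BFI} is obtained there by specializing \cite[Theorem~4.2]{Bruinier:2011} to the lattice of Example~2.1 with $N=1$ (i.e.\ the lift of the constant function), after making exactly the typographical corrections you anticipate, so that the cycle traces of $1$ produce $h^*(d)$ and the corrected archimedean factor gives $\mathcal F(2\sqrt{\pi y}\,m)=-2\pi\alpha(4m^2y)$; your closing comparison of $\bm Z$ with $\widehat{\bm Z}_+$ via Serre--Stark to pin down $a(n^2)$ and deduce $\xi_{\frac12}\bm Z=-2\widehat{\bm Z}_-$ matches the paper's remark that the two forms differ by a constant multiple of $\theta$. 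The only element you omit is the paper's additional, independent derivation of the same expansion as a limit of the Poincar\'e series $P_0^+(\tau,s)$, with the square-index coefficients handled by dampened Eisenstein series, but that alternative is not needed for the theorem itself.
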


The main result of Duke, Imamo\=glu, and T\'oth \cite{Duke:2011a} gives an interpretation of the coefficients $h^*(d)$ of $\widehat{\bm Z}_+$ as regularized inner products in the case when $d>0$ is not a square.
To describe the result, we recall that for  each $d>0$ there exists a unique weight $\frac{3}{2}$ weakly holomorphic modular form $g_d$ on $\Gamma_0(4)$ of the form
\[g_d(\tau)= q^{-d}+\sum_{0\le n\equiv 0,3 (4)}B(d,n)q^n,\]
where the $B(d,n)$ are integers and
\[B(d, 0)=\begin{cases} -2 &\ \ \text{if $d=\square$},\\
0& \ \ \text{otherwise.}
\end{cases}
\]
Proposition~4.1 of \cite{Duke:2011a} gives the formula
\[\langle g_d, \widehat{\bm Z}_-\rangle_{\operatorname{reg}}=-\frac34\frac{h^*(d)}{\sqrt{d}}\qquad \text{if $d>0$ is not square}.\]
 Here $\langle \cdot, \cdot\rangle_{\operatorname{reg}}$ is the usual regularized inner product.  The  integral
defining this inner product does not converge when $d$ is square.

Motivated by recent work of Bringmann, Diamantis and Ehlen \cite{BDE} we introduce a natural inner product
$\langle \cdot, \cdot\rangle_4$  which extends $\langle \cdot, \cdot\rangle_{\operatorname{reg}}$ and which allows us to treat the case when $d$ is square. We give the precise definition in Section~\ref{sec:DIT}.  Letting
\[\delta_\square(d)=\begin{cases} 1&\ \  \text{if $d$ is  square},\\
 $0$&\ \ \text{otherwise},\end{cases}\]
we prove the following.
\begin{theorem}\label{thm:innprodlevel4}
For  every positive discriminant $d$ we have
\begin{equation*}
\langle g_d, \widehat{\bm Z}_-\rangle_4=
 -\frac{h^*(d)}{\sqrt{d}}+\delta_{\square}(d)\left(\frac{\gamma-\log 4\pi}{2\pi}\right).
\end{equation*}
\end{theorem}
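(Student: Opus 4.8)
The plan is to treat the non-square and square discriminants separately, the former by citing \cite{Duke:2011a} and the latter by a direct evaluation of the regularized integral in the spirit of \cite{BDE}. I would first recall from Section~\ref{sec:DIT} that $\langle\cdot,\cdot\rangle_4$ is defined as the constant term $\CT_{s=0}$ of a family of integrals $\int_{\mathcal F}g_d\,\overline{\widehat{\bm Z}_-}\,y^{3/2}\,\frac{dx\,dy}{y^2}$ over a fundamental domain $\mathcal F$ for $\Gamma_0(4)$, regularized by an auxiliary complex parameter $s$; the normalization is chosen so that, on forms where $\langle\cdot,\cdot\rangle_{\operatorname{reg}}$ already converges, one has $\langle\cdot,\cdot\rangle_4=\tfrac43\langle\cdot,\cdot\rangle_{\operatorname{reg}}$, the factor $\tfrac43$ being the one forced by the two target formulas.

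For non-square $d$ there is then nothing new: $\delta_\square(d)=0$, Proposition~4.1 of \cite{Duke:2011a} gives $\langle g_d,\widehat{\bm Z}_-\rangle_{\operatorname{reg}}=-\tfrac34 h^*(d)/\sqrt d$, and the factor $\tfrac43$ converts this into $-h^*(d)/\sqrt d$. The whole difficulty is concentrated in the square case $d=m^2$, and the reason is structural: the cycle attached to a square discriminant degenerates into an infinite geodesic joining two cusps, so the integral defining $\langle g_{m^2},\widehat{\bm Z}_-\rangle_{\operatorname{reg}}$ diverges. Equivalently, $\widehat{\bm Z}_-$ carries a Fourier term $-\tfrac{m}{2}\beta_{3/2}(4\pi m^2y)\,q^{-m^2}$ of index $-d$ (absent for non-square $d$), which resonates with the principal part $q^{-d}$ of $g_d$ and is what the extended product is designed to absorb.

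For $d=m^2$ I would compute $\CT_{s=0}$ directly. Using the explicit expansion \eqref{eq:zagform} of $\widehat{\bm Z}_-$ together with the regularized unfolding of $g_{m^2}$, the pairing reduces to parametrized $y$-integrals whose integrands involve the incomplete gamma factor $\beta_{3/2}(4\pi m^2y)$ together with the constant term $\tfrac1{8\pi\sqrt y}$ of $\widehat{\bm Z}_-$, and whose Mellin transforms are products of gamma factors. I expect $\CT_{s=0}$ to split into a main part, which should reassemble — now using the extended regulator $R(m^2)=2\log\sqrt{m^2}$ and the definition \eqref{eq:hurwitz} of $h^*$ — into $-h^*(m^2)/\sqrt{m^2}$, and a purely divergence-induced part contributing the exceptional constant.

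The hard part will be extracting this constant term without a normalization slip. The Laurent expansion at $s=0$ has a simple pole, and its constant term should produce $\gamma$ (via $\Gamma'(1)=-\gamma$ in the expansion of the gamma factors) and $\log4\pi$ (from the argument $4\pi m^2y$ of $\beta_{3/2}$ together with the Gaussian normalization), assembling into $\delta_\square(d)\,\tfrac{\gamma-\log4\pi}{2\pi}$. Delicate bookkeeping is needed to see that these transcendental constants survive in exactly this combination while everything else folds into the $h^*$-term. As a safeguard I would check the outcome against the constant term $\tfrac{\gamma-\log(16\pi y)}{4\pi}$ appearing in the companion form \eqref{eq:BFI} of Theorem~\ref{thm:BFI}, which shares the same $\gamma$ and $\log$-of-$\pi$ structure.
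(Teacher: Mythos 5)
Your strategy for square $d$ has a genuine gap: it never uses the one ingredient that actually produces the answer, namely a $\xi$-primitive of $\widehat{\bm Z}_-$ whose Fourier expansion is \emph{known at square indices}. A term-by-term Mellin computation of the regularized integral of $g_{m^2}\overline{\widehat{\bm Z}_-}\,y^{3/2}$ from the expansion \eqref{eq:zagform} cannot succeed: Fourier expansions only control the integral over the cusp neighborhoods, the integral over the compact part of the fundamental domain is not computable this way, and no unfolding is available since neither $g_d$ nor $\widehat{\bm Z}_-$ is realized here as a Poincar\'e series. The paper's proof instead uses $\xi_{\frac12}{\bm Z}=-2\widehat{\bm Z}_-$ and Stokes' theorem \eqref{eq:stokes} to collapse the entire (suitably regularized) integral to a line integral of $g_d\cdot{\bm Z}$ along $\im\tau=Y$, which \emph{is} computable term by term; see \eqref{eq:up_to_Y}. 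That is exactly where both halves of the answer come from: $-h^*(d)/\sqrt d$ arises from $q^{-d}$ meeting the coefficient $\frac{h^*(d)}{\sqrt d}q^d$ of ${\bm Z}$, and $\frac{\gamma-\log 4\pi}{2\pi}$ arises from $B(d,0)=-2\delta_\square(d)$ meeting the constant term $\frac{\gamma-\log(16\pi y)}{4\pi}$ of ${\bm Z}$. So Theorem~\ref{thm:BFI}, which you relegate to a ``safeguard'', is the engine of the proof; without it (or an equivalent determination of the square-indexed coefficients of a primitive, which is the hard content supplied by the Bruinier--Funke--Imamo\=glu lift), there is no mechanism in your computation by which the real-quadratic class numbers and regulators constituting $h^*(m^2)$ could appear --- they are simply not visible in \eqref{eq:zagform}.

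Two further structural errors. First, you misidentify the divergence: the ``resonance'' of $q^{-d}$ with the term $-\frac{\sqrt d}{4}\beta_{\frac32}(4\pi dy)q^{-d}$ of $\widehat{\bm Z}_-$ is absolutely convergent, since $\beta_{\frac32}(y)e^{y}\ll y^{-\frac32}$ by \eqref{eq:beta_est}; what actually diverges (see \eqref{eq:inn_prod_justify}) is the pairing of the constant term $B(d,0)=-2\delta_\square(d)$ of $g_d$ against the non-decaying terms $H(0)$ and $\frac1{8\pi\sqrt y}$ of $\widehat{\bm Z}_-$, which produces the growth $\frac{\sqrt Y}{3}-\frac{\log Y}{2\pi}$ subtracted in \eqref{E:def2}. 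Second, the normalization cannot be ``chosen so that $\langle\cdot,\cdot\rangle_4=\frac43\langle\cdot,\cdot\rangle_{\operatorname{reg}}$, forced by the target formulas'' --- that is circular. In the paper the product is defined intrinsically as the constant term at $s=0$ of the $y^{-s}$-regularized integral of the associated \emph{vector-valued} forms over the standard $\SL_2(\Z)$ fundamental domain, and the factor $\frac43$ is then \emph{derived} from the comparison \eqref{eq:inn_prod_rel}; moreover, once the Stokes argument is in place, the non-square case requires no separate appeal to Proposition~4.1 of \cite{Duke:2011a}, since the same boundary computation treats all positive discriminants $d$ uniformly.
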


Our main goal in this paper is to introduce and to study a polyharmonic Maass form analogous to ${\bm Z}(\tau)$ on the full modular group.
Let $p(n)$ denote the partition function, and let $\spt(n)$ denote the number of smallest parts in the partitions of $n$. This function has been the object
of much study (see, for example, \cite{aa-spt, Andrews:spt, Bringmann:duke,  FO:spt,Garvan:spt}, and the references in these papers).
Let $\chi_{12}$ denote the Kronecker character for $\Q(\sqrt 3)$.
If we define
\begin{equation}\label{eq:sdef}
	s(n) := \spt(n) + \mfrac{1}{12} (24n-1) p(n),
\end{equation}
then work of Bringmann \cite{Bringmann:duke} (see the next section for details) shows that, in analogy with \eqref{eq:zagform}, the generating function
\begin{equation}\label{eq:fdef}
	F(\tau) := \sum_{n=-1}^\infty s\pmfrac{n+1}{24} q^\frac n{24} - \mfrac{1}{2} \sum_{n=1}^\infty \chi_{12}(n) n \, \beta_{\frac 32}\pmfrac{\pi n^2y}6 q^{-\frac {n^2}{24}}
\end{equation}
is a harmonic Maass form of weight $3/2$ on $\SL_2(\Z)$ with a certain multiplier.
In particular we have
\begin{equation} \label{eq:xi-F-eta}
\qquad \xi_\frac32F=-\mfrac{\sqrt 6}{4\pi}\,\eta,
\end{equation}
where $\eta(\tau):=q^{\frac1{24}}\prod_{n\geq 1}(1-q^n)$ is Dedekind's eta function.
In analogy with \eqref{eq:zhatplus} and \eqref{eq:BFI} we introduce a   polyharmonic Maass form $\bh(\tau)$ in Theorem~\ref{thm:h} below with
\[\xi_\frac12 \bh=-2\sqrt 6F.\]
The non-trivial coefficients of positive index are given by traces of a certain modular function $f$ of level $6$
over geodesics on the modular curve. 
Those of negative index are given by the numbers $s(n)$, which, by recent work of the first two authors \cite{aa-spt}, satisfy the relation
\begin{equation} \label{eq:s-alg-0}
	12 \, s\pmfrac{1-n}{24} = \sum f(\tau_Q),
\end{equation}
where the sum is over quadratic points in the upper half plane (see \eqref{eq:s-alg} below for details).
Again, the terms of square index are  intractable due to  poles
in the Poincar\'e series.

In order to deduce \eqref{eq:s-alg-0}, the authors of \cite{aa-spt} used a theta lift of Bruinier-Funke \cite{bf-traces}.  In a similar way, the
theta lift of \cite{Bruinier:2011} could be used to deduce Theorem~\ref{thm:h} below.  Here we  compute the expansion directly
from the limit definition.
Of course, most of the difficulty comes from the coefficients of square index.
We remark that this leads  to a proof of the algebraic formula  \eqref{eq:s-alg-0} which does not involve the theta lift.
We also remark that a similar argument applied to a suitable modification of the limit definition \cite[(5.4)]{DIT:CycleIntegrals} of
$\widehat{\bm Z}_+(\tau)$  produces the expansion \eqref{eq:BFI} without recourse to the theta lift.  We give a brief
discussion in Section~\ref{sec:DIT}, in which we also prove Theorem~\ref{thm:innprodlevel4}.

In the next section we give some background and describe the form $\bh(\tau)$.
Section~\ref{sec:def} contains the limit definition of the form $\bh(\tau)$ as well as some technical results on convergence issues (which are somewhat subtle).
In Sections~\ref{sec:nonsquare} and \ref{sec:square-index}
we compute the coefficients of non-square and square index, respectively.

In Section~\ref{sec:inn-prod}, we define a regularized inner product $\langle \cdot,\cdot \rangle_1$ for forms on $\SL_2(\Z)$ and we prove an analogue of Theorem~\ref{thm:innprodlevel4} for the form $F$.   In particular, we define  a family $\{h_d\}$ of weakly holomorphic forms of weight $\frac 32$ 
and describe the inner products $\langle h_d,F\rangle_1$ in Theorem~\ref{T:RIP}.

In this paper we have decided to emphasize precision and (to the extent   possible) simplicity in order to highlight  the polyharmonic Maass forms $\bm Z$ and $\bm H$.
The computations are quite subtle already,
but it seems clear that similar results hold in more generality.

\section{A polyharmonic Maass form of depth $3/2$ for $\SL_2(\Z)$} \label{sec:results}

Let $\spt(n)$ denote the number of smallest parts in the partitions of $n$,
and let $s(n)$ and $F(\tau)$ be defined as in \eqref{eq:sdef} and \eqref{eq:fdef}.
Let $\eta(\tau)$ be the Dedekind eta function, and define the  multiplier $\chi$  by
\begin{equation}\label{eq:chidef} \eta(\gamma\tau)=\chi(\gamma)\sqrt{c\tau+d}\,\eta(\tau)\qquad \text{ for  } \gamma=\pmatrix abcd\in \SL_2(\Z).
\end{equation}

After work of Bringmann \cite{Bringmann:duke} (see  Section~3 of \cite{aa-spt}), we know that  $F(\tau)$ is a harmonic Maass form of weight $3/2$ on $\SL_2(\Z)$ with multiplier $\bar \chi$.
Let $E_{2k}(\tau)$ denote the Eisenstein series of weight $2k$ on $\SL_2(\Z)$, and
let $f(\tau)$ be the modular function on $\Gamma_0(6)$ given by
\begin{equation} \label{E:f}
	f(\tau) = \frac{1}{24} \frac{E_4(\tau) - 4E_4(2\tau) - 9E_4(3\tau) + 36E_4(6\tau)}{(\eta(\tau)\eta(2\tau)\eta(3\tau)\eta(6\tau))^2} = q^{-1} + 12 + 77q + \ldots.
\end{equation}
For $n\equiv 1\pmod{24}$ define
\[
	\sQ_{n}:= \left\{ ax^2+bxy+cy^2: b^2-4ac=n, \, 6\mid a>0, \, \text{ and } b\equiv 1\bmod 12 \right\}.
\]
The group
\begin{equation}\label{eq:gamma_def}
\Gamma := \Gamma_0(6)/\{\pm 1\}
\end{equation}
 acts on this set, and for squarefree $n$, the class number $h(n)$ is the size of $\Gamma\backslash \sQ_n$ (see \cite[Section I]{GKZ}).

If $0>n\equiv 1\pmod{24}$ then for each $Q\in \sQ_{n}$ let $\tau_Q$ denote the root of $Q(\tau,1)$ in the upper-half plane $\H$.
In \cite{aa-spt} (see the end of Section~3) the first two authors proved
 the algebraic formula
\begin{equation} \label{eq:s-alg}
	12 \, s\pmfrac{1-n}{24} = \Tr_{n}(f) := \sum_{Q\in \Gamma\backslash \sQ_{n}} \!\!\! f(\tau_Q).
\end{equation}
This together with some analytic considerations leads to a transcendental formula  for $\spt(n)$ (Theorem~1 of \cite{aa-spt})
which is analogous to Rademacher's formula for $p(n)$.

For an indefinite quadratic form $Q$ with non-square discriminant, let $C_Q$ denote the geodesic in the upper half plane
connecting the roots of $Q$, modulo the stabilizer of $Q$.
For positive non-square $n\equiv 1\pmod{24}$, define
\begin{equation} \label{eq:def-trace-non-square}
 	\Tr_n(f) := \mfrac{1}{2\pi} \sum_{Q\in \Gamma\backslash\sQ_n} \int_{C_Q} f(\tau) \, \frac{d\tau}{Q(\tau,1)}.
\end{equation}

The situation is more difficult when $n$ is  square.  In this case the geodesics $C_Q$ are infinite and the  integrals in \eqref{eq:def-trace-non-square} are divergent.
In analogy with  \cite{andersen-inf-geo,andersen-singular} we will define dampened versions of the function $f(\tau)$ in Section~\ref{sec:square-index}.
For each quadratic form with square discriminant we will construct a function  $f_Q(\tau)$ by subtracting off the constant term and the exponentially growing terms in the Fourier expansion of $f$ at the cusps corresponding to roots of $Q(\tau,1)$.  See  \eqref{eq:fqdef} for the precise definition.
For each indefinite form $Q$ with non-square discriminant we  define $f_Q(\tau):=f(\tau)$ to ease notation.
Then we can make the uniform definition
\begin{equation}\label{eq:fqtil}
	\Tr_n(f) := \mfrac 1{2\pi} \sum_{Q\in \Gamma\backslash\sQ_n} \int_{C_Q} f_Q(\tau) \, \frac{d\tau}{Q(\tau,1)}
\end{equation}
for  every positive $n\equiv 1\pmod {24}$.

We follow \cite{LR-polyharmonic} in defining polyharmonic Maass forms.\footnote{The polyharmonic Maass forms in \cite{LR-polyharmonic} have at most polynomial growth at the cusps. While it would be more precise to call the functions in this paper polyharmonic \emph{weak} Maass forms, we follow the usual practice~\cite{hmf-book} of dropping the adjective ``weak''.}
Let $\Delta_k$ denote the weight $k$ hyperbolic Laplacian
\begin{equation} \label{eq:def-deltak}
	\Delta_k := -\xi_{2-k} \xi_k = -y^2 \left( \mfrac{\partial^2}{\partial x^2} + \mfrac{\partial^2}{\partial y^2} \right) + i k y \left( \mfrac{\partial}{\partial x} + i \mfrac{\partial}{\partial y} \right).
\end{equation}
Let $m$ be a non-negative integer.
We say that a real analytic function $h:\H\to\C$ is a polyharmonic Maass form of weight $k$ and depth $m$ on $G\subseteq \SL_2(\Z)$ with multiplier $\nu$ if
\begin{equation*}
	\Delta_k^m h = 0
\end{equation*}
and if
\begin{equation}
	h(\gamma \tau) = \nu(\gamma) (c\tau+d)^k h(\tau) \qquad \text{ for all } \gamma=\pmatrix abcd\in G.
\end{equation}
We allow $h$ to have exponential growth at the cusps of $G$.
We define half-integral depths by assigning weakly holomorphic modular forms a depth of $\frac 12$ since they are annihilated by $\xi_k$, which is essentially ``half'' of $\Delta_k$.
We say that $h$ is polyharmonic  of depth $m+\frac 12$ if $\Delta_k^m h$ is weakly holomorphic.

Recall the definitions \eqref{eq:alphadef} \eqref{eq:betadef} and  \eqref{eq:chidef} of $\alpha$, $\beta_k$,  and $\chi$.
By \cite[\S8.2]{NIST:DLMF} we have
\begin{equation}\label{eq:beta_to_gamma}
\beta_k(y)=1-y^{1-k}\gamma^*(1-k, y),
\end{equation}
where $\gamma^*(1-k,z)$ is an entire function of $z$, given by
\begin{equation}\label{eq:gammastardef}
\gamma^*(1-k, z)=\frac1{\Gamma(1-k)}\int_0^1t^{-k}e^{-zt}\, dt\qquad \text{for $k<1$.}
\end{equation}
We use the principal branch of the square root, with the convention that
\[\sqrt{-y}=i\sqrt y\qquad \text{for $y>0$.}\]
This gives a definition of $\beta_\frac12(-y)$ for $y>0$.

In analogy with Theorem~\ref{thm:BFI} we will prove
\begin{theorem}\label{thm:h}
The function
\begin{multline} \label{eq:h-exp}
	\bh(\tau) = -i \, q^{\frac 1{24}} + \sum_{0<n\equiv 1(24)} \Tr_n(f) q^{\frac n{24}} + 12 \sum_{n\geq 1}\frac{\chi_{12}(n)}{n} h^*(n^2) q^{\frac{n^2}{24}}  \\*
	+ i \beta_{\frac 12}\pmfrac{-\pi y}6 q^{\frac 1{24}} +  \sum_{0>n\equiv 1(24)}   \, \frac{\Tr_n(f)}{\sqrt{|n|}}
	\beta_{\frac 12}\pmfrac{\pi |n|y}6 q^{\frac n{24}}
	  + 24 \sum_{n\geq 1} \chi_{12}(n) \, \alpha \left( \mfrac {n^2 y}6 \right)  q^{\frac{n^2}{24}}
\end{multline}
is a polyharmonic Maass form of weight $\frac 12$ and depth $\frac 32$ on $\SL_2(\Z)$ with multiplier $\chi$.
Moreover, we have $\xi_{\frac 12} \bh(\tau) = - 2\sqrt 6 F(\tau)$ and $\Delta_{\frac 12} \bh(\tau) = -\tfrac{3}{\pi} \eta(\tau)$.
\end{theorem}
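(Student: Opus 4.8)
The plan is to establish three things: (i) the transformation law under $\SL_2(\Z)$ with multiplier $\chi$, (ii) the differential equations $\xi_{\frac12}\bm H = -2\sqrt6\,F$ and $\Delta_{\frac12}\bm H = -\tfrac3\pi\eta$, and (iii) the identification of the Fourier coefficients. Since the paper constructs $\bm H$ via a limit of Poincar\'e series (Section~\ref{sec:def}), the cleanest route is to read off the modularity directly from that limit definition: each Poincar\'e series in the defining family is manifestly modular of weight $\frac12$ with multiplier $\chi$, and modularity is preserved under the (suitably regularized) limit, so $\bm H(\gamma\tau)=\chi(\gamma)(c\tau+d)^{\frac12}\bm H(\tau)$ follows once the convergence issues flagged in Section~\ref{sec:def} are under control. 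The Fourier expansion \eqref{eq:h-exp} is then obtained by computing the Fourier coefficients of the limit, which is precisely the content of Sections~\ref{sec:nonsquare} and \ref{sec:square-index}: the non-square indices give the traces $\Tr_n(f)$ through geodesic cycle integrals, and the square indices produce the terms involving $h^*(n^2)$ and $\alpha(n^2y/6)$.

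For the differential equations I would work formally with the stated expansion \eqref{eq:h-exp}, which reduces everything to the behavior of $\xi_{\frac12}$ and $\Delta_{\frac12}$ on the three building blocks $q^{a}$ (holomorphic), $\beta_{\frac12}(cy)q^{-a}$, and $\alpha(cy)q^{a}$. Applying $\xi_{\frac12}$: the holomorphic pieces $q^{a}$ are killed, and the terms $\beta_{\frac12}(\pi|n|y/6)q^{n/24}$ for $n\equiv1\,(24)$ map, via the differentiation rule $\frac{\partial}{\partial y}\beta_k(cy) = -\frac{c^{1-k}}{\Gamma(1-k)}\,y^{-k}e^{-cy}$ coming from \eqref{eq:betadef}, onto holomorphic exponentials matching the holomorphic part of $F$ in \eqref{eq:fdef}; meanwhile the $\alpha$-terms must be checked to produce the non-holomorphic $\beta_{\frac32}$-part of $F$. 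Tracking the constants $12$, $24$, and the relation $h^*(n^2)$ versus $\chi_{12}(n)$ should yield exactly $-2\sqrt6\,F$, using \eqref{eq:xi-F-eta}. For $\Delta_{\frac12}=-\xi_{\frac32}\xi_{\frac12}$ I would then apply $\xi_{\frac32}$ to $-2\sqrt6\,F$ and invoke \eqref{eq:xi-F-eta} directly to get $\Delta_{\frac12}\bm H = -2\sqrt6\,\xi_{\frac32}F = -2\sqrt6\cdot\bigl(-\tfrac{\sqrt6}{4\pi}\eta\bigr) = \tfrac{3}{\pi}\eta$; the sign in the stated $-\tfrac3\pi\eta$ I would reconcile against the precise sign conventions in \eqref{eq:def-deltak}, taking care that $\Delta_k=-\xi_{2-k}\xi_k$ pairs $\xi_{\frac12}$ with $\xi_{2-\frac12}=\xi_{\frac32}$ as required. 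In particular, since $\xi_{\frac32}F$ is a holomorphic (hence weakly holomorphic) multiple of $\eta$, this simultaneously confirms that $\bm H$ has depth exactly $\frac32$: one application of $\Delta_{\frac12}$ lands in the weakly holomorphic space.

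The main obstacle will be the square-index coefficients. The whole point of the construction is that the naive cycle integrals \eqref{eq:def-trace-non-square} diverge when $n$ is a perfect square, because the geodesics $C_Q$ become infinite, so one cannot simply read off $\Tr_n(f)$ there. Establishing that the limit of Poincar\'e series produces precisely the $\alpha(n^2y/6)$ non-holomorphic terms together with the $h^*(n^2)$ holomorphic terms — rather than some other regularization — requires the dampened functions $f_Q$ of \eqref{eq:fqtil} and a careful analysis of the poles of the Poincar\'e series at square index (the same phenomenon that left the coefficients $a(n^2)$ undetermined in \cite{DIT:CycleIntegrals}). The delicate convergence estimates of Section~\ref{sec:def} feed directly into making this rigorous, and I expect the bulk of the technical work, including the appearance of the regulator term $R(d)=2\log\sqrt d$ for $d=\square$ that distinguishes $\bm H$ from a bare theta-series correction, to live in Section~\ref{sec:square-index}.
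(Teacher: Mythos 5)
Your overall architecture follows the paper for two of the three ingredients: modularity is inherited from the Poincar\'e series $P(\tau,s)$ through the regularized limit \eqref{eq:hdef}, and the Fourier expansion \eqref{eq:h-exp} is obtained from the coefficient computations of Sections~\ref{sec:nonsquare} and~\ref{sec:square-index}. Where you genuinely diverge is in the differential equations, and there your proposal has a gap. The paper proves $\Delta_{\frac12}\bh=-\frac3\pi\eta$ \emph{first}, directly from the eigenvalue identity $\Delta_{\frac12}P(\tau,s)=(s-\frac14)(\frac34-s)P(\tau,s)$ and $\lim_{s\to\frac34^+}(s-\frac34)c(s)P(\tau,s)=\eta(\tau)$, and then deduces $\xi_{\frac12}\bh=-2\sqrt6\,F$ by a Liouville-type argument: $\xi_{\frac32}\bigl(\xi_{\frac12}\bh\bigr)=\xi_{\frac32}\bigl(-2\sqrt6\,F\bigr)$, so the difference lies in the kernel of $\xi_{\frac32}$; its only possible exponentially growing term (read off from the intermediate expansion \eqref{eq:htau1}, \emph{before} any trace identification) cancels, and there are no nonzero weight-$\frac32$ forms with multiplier $\bar\chi$ on $\SL_2(\Z)$ holomorphic on $\H$ and at $\infty$. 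Crucially, this argument never uses the arithmetic identification of the coefficients.

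Your term-by-term verification of $\xi_{\frac12}\bh=-2\sqrt6\,F$ from \eqref{eq:h-exp} cannot be closed by ``tracking the constants.'' Applying $\xi_{\frac12}$ to the terms $\frac{\Tr_n(f)}{\sqrt{|n|}}\beta_{\frac12}\pmfrac{\pi|n|y}6 q^{\frac n{24}}$ with $n<0$ gives $-\frac{\Tr_n(f)}{\sqrt 6}\,q^{\frac{|n|}{24}}$, and matching these against the holomorphic coefficients $-2\sqrt 6\,s\pmfrac{|n|+1}{24}$ of $-2\sqrt6\,F$ is \emph{exactly} the algebraic formula \eqref{eq:s-alg}, i.e.\ $\Tr_n(f)=12\,s\pmfrac{1-n}{24}$. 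That is a deep arithmetic theorem (proved in \cite{aa-spt} via a theta lift), not bookkeeping, and your proposal nowhere invokes it. If you do import \eqref{eq:s-alg} your argument becomes correct, but it then inverts the paper's logic and forfeits its payoff: because the paper's proof of the differential equation is independent of the coefficient identification, comparing coefficients afterwards yields a new, theta-lift-free proof of \eqref{eq:s-alg-0}, which the paper explicitly advertises. (The $\alpha$-term computation you defer does work out: one finds $\xi_{\frac12}\bigl(\alpha\pmfrac{n^2y}6 q^{\frac{n^2}{24}}\bigr)=\frac{n}{4\sqrt6}\,\beta_{\frac32}\pmfrac{\pi n^2y}6 q^{-\frac{n^2}{24}}$, matching the nonholomorphic part of $-2\sqrt 6\,F$, and the square-index holomorphic terms are killed by $\xi_{\frac12}$, so \eqref{eq:s-alg} is the only arithmetic input missing.) Finally, a small but real slip: with the convention \eqref{eq:def-deltak}, $\Delta_{\frac12}\bh=-\xi_{\frac32}\bigl(\xi_{\frac12}\bh\bigr)=+2\sqrt6\,\xi_{\frac32}F=-\frac3\pi\eta$; your displayed computation dropped the leading minus sign and produced $+\frac3\pi\eta$, so the ``reconciliation'' you defer is precisely restoring that sign.
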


\section{Definition of the polyharmonic Maass form $\bh(\tau)$} \label{sec:def}
In this section we define the polyharmonic Maass form $\bh(\tau)$ as the constant term in the Laurent expansion at $s=\frac 34$ of a Poincar\'e series $P(\tau,s)$.
This requires the Whittaker functions $M_{\kappa,\mu}(y)$ and $W_{\kappa,\mu}(y)$ (see \cite[\S 13.4]{NIST:DLMF}) and the Bessel functions $J_\nu(x)$, $I_\nu(x)$, and $K_\nu(x)$ (see \cite[Chapter 10]{NIST:DLMF}).
Let $\Gamma_\infty := \{\pm\pmatrix 1*01\}\subseteq \SL_2(\Z)$.
For $\re(s)>1$ we define the weight $\frac 12$ Poincar\'e series
\begin{equation*}
	P(\tau,s) := \mfrac 12 \sum_{\gamma \in \Gamma_\infty \backslash \SL_2(\Z)} \bar \chi(\gamma) (c\tau+d)^{-\frac 12} \sM(\im \gamma\tau,s)e_{24}(\re \gamma\tau),
\end{equation*}
where $e_m(x):=e\left(\frac xm\right)$ and
\begin{equation*}
	\sM(y,s) := \pmfrac{\pi y}{6}^{-\frac 14} M_{\frac 14, s-\frac 12} \pmfrac{\pi y}{6}.
\end{equation*}
Define the generalized Kloosterman sum
\begin{equation}\label{Acn}
A_c(n):=\sum_{\substack{d \bmod c\\ (d,c)=1}}e^{\pi i s(d,c)}e \left( -\frac{dn}c \right) ,
\end{equation}
where $s(d,c)$ is the Dedekind sum
\begin{equation*}
	s(d,c) := \sum_{r=1}^{c-1} \frac rc \left( \frac{dr}c - \left\lfloor \frac{dr}c \right\rfloor - \frac 12 \right).
\end{equation*}
The papers \cite{aa-52} and \cite{andersen-singular} used the notation
\begin{equation}
	K(m,n;c) := \sum_{d\bmod c^*} e^{\pi i s(d,c)} e\pfrac{\bar{d}m+dn}{c};
\end{equation}
we have
\begin{equation}\label{eq:kloos_reconcile}
A_c(n)=K(0, -n, c).
\end{equation}
 By Proposition 8 of \cite{aa-52} we have the Fourier expansion
\begin{equation*}
	P(\tau,s) = \sM(y,s) e_{24}(x)
	+ \sum_{n\equiv 1(24)} a(n,s) \sW_n(y,s) e_{24}(nx)
\end{equation*}
where
\begin{equation*}
	\sW_{n}(y,s) := \pmfrac{\pi|n|y}{6}^{-\frac 14} W_{\frac {\sgn n}4,s-\frac 12} \pmfrac{\pi|n|y}{6}
\end{equation*}
and
\begin{equation}\label{eq:ansdef}
	a(n,s) = \frac{2\pi \Gamma(2s)}{|n|^{\frac 14}\Gamma(s+\frac{\sgn n}4)} \sum_{c>0} \frac{A_c\pfrac{1-n}{24}}{c}\times\begin{cases}
	 J_{2s-1}\pmfrac{\pi \sqrt n}{6c}\ &\text{if $n>0$},\\
	I_{2s-1}\pmfrac{\pi \sqrt {|n|}}{6c}\ &\text{if $n<0$}.
	\end{cases}
\end{equation}

Lehmer \cite[Theorem~8]{Lehmer:series} proved the sharp Weil-type bound
$|A_c(n)|\leq 2^{\omega_0(c)} \sqrt c$, where $\omega_0(c)$ is the number of distinct odd primes dividing $c$.
Using this together with the estimates \eqref{eq:whittakerest}, \eqref{eq:jbesselbound}, \eqref{eq:IBesselbound1} and \eqref{eq:IBesselbound2} below, one can show that $P(\tau,s)$ has an analytic continuation to $\re(s)>3/4$  (this can also be seen using  the estimates in Lemma~\ref{lem:ans} below).

Let
\begin{equation}\label{eq:csdef}
	c(s) := \frac{(2s-1)\Gamma(s-\frac 14)\Gamma(2s-\frac 12)(2^{2s-\frac 12}-1)(3^{2s-\frac 12}-1)\zeta(4s-1)}{6^{s-\frac 34}\pi^{2s}\Gamma(2s)}.
\end{equation}
We define $\bh(\tau)$ as follows:
\begin{equation}\label{eq:hdef}
	\bh(\tau) := \frac{6}{\pi} \lim_{s\to\frac 34^+} \left( c(s) P(\tau,s) - \frac{\eta(\tau)}{(s-\frac 34)} \right).
\end{equation}
Let $\chi_{12}$ denote the Kronecker character for $\Q(\sqrt 3)$ (with the convention that $\chi_{12}(n):=0$ if $n\not\in\Z$).
The goal of this section is to prove the following.
\begin{proposition}\label{prop:hdef}
\begin{enumerate}
\item The limit defining $\bh(\tau)$ exists.
\item
  The function $\bh(\tau)$   is a polyharmonic Maass form of weight $\frac 12$ and depth $\frac 32$ on $\SL_2(\Z)$ with multiplier $\chi$.
\item We have $\xi_\frac12 \bh(\tau)=-2\sqrt 6 F(\tau)$ and $\Delta_\frac12 \bh(\tau)=-\frac{3}\pi\eta(\tau)$.
\item We have
\begin{multline}\label{eq:htau1}
	\bh(\tau) = -i  \left( 1- \beta_{\frac 12}\pmfrac{-\pi y}6 \right)  \, q^{\frac 1{24}} +  \mfrac{2}{\sqrt \pi}\sum_{\substack{0<n\equiv 1(24) \\ n\neq\square}}a(n,\tfrac34)q^{\frac{n}{24}} + 2 \!\! \sum_{0>n\equiv 1(24)}a(n,\tfrac34)\beta_{\frac 12}\pmfrac{\pi |n|y}6 q^{\frac{n}{24}}
	 \\ + \mfrac{6}{\pi} \sum_{\substack{0<n\equiv 1(24) \\ n=\square}} \lim_{s\to\frac 34} \left(c(s)a(n,s) \sW_n(y,s)- \mfrac{\chi_{12}(\sqrt n)}{s-\frac 34} e^{-\frac{\pi n y}{12}}\right)e_{24}(nx).
\end{multline}
\end{enumerate}
\end{proposition}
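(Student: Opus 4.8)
The plan is to prove (1) and (4) together by inserting the Fourier expansion of $P(\tau,s)$ into \eqref{eq:hdef}, multiplying by $c(s)$, and evaluating the limit $s\to\tfrac34$ term by term. Two facts drive the computation. First, a direct evaluation of \eqref{eq:csdef} gives $c(\tfrac34)=\tfrac{\sqrt\pi}{3}$, so $\tfrac6\pi c(\tfrac34)=\tfrac2{\sqrt\pi}$. Second, at $s=\tfrac34$ we have $\mu=s-\tfrac12=\tfrac14$, and the Whittaker functions in $\sM$ and $\sW_n$ degenerate: using $U(\tfrac12,\tfrac32,z)=z^{-1/2}$ one finds $\sW_n(y,\tfrac34)=e^{-\pi ny/12}$ for $n>0$; using the incomplete-gamma identity $\Gamma(\tfrac12,z)=e^{-z/2}z^{-1/4}W_{-1/4,1/4}(z)$ one finds $\sW_n(y,\tfrac34)=\sqrt\pi\,e^{\pi|n|y/12}\beta_{\frac12}(\pi|n|y/6)$ for $n<0$; and a similar reduction of $M_{1/4,1/4}$ (using the branch $\sqrt{-y}=i\sqrt y$) gives $\tfrac6\pi c(\tfrac34)\sM(y,\tfrac34)e_{24}(x)=-i\big(1-\beta_{\frac12}(-\pi y/6)\big)q^{1/24}$. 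Since $e^{-\pi ny/12}e_{24}(nx)=e^{\pi|n|y/12}e_{24}(nx)=q^{n/24}$, these evaluations reproduce precisely the principal term $-i\big(1-\beta_{\frac12}(-\pi y/6)\big)q^{1/24}$, the positive non-square terms $\tfrac2{\sqrt\pi}a(n,\tfrac34)q^{n/24}$, and the negative terms $2a(n,\tfrac34)\beta_{\frac12}(\pi|n|y/6)q^{n/24}$ of \eqref{eq:htau1}.

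The split into non-square and square index is essential. For non-square $n$ the coefficient $a(n,s)$ of \eqref{eq:ansdef} is holomorphic at $s=\tfrac34$, and the limit of $c(s)a(n,s)\sW_n(y,s)e_{24}(nx)$ is taken directly as above. For $n=m^2$ the Kloosterman–Bessel series in \eqref{eq:ansdef} sits exactly at its edge of convergence when $s=\tfrac34$, and $a(n,s)$ has a simple pole there whose leading holomorphic residue equals $\chi_{12}(m)$; since Euler's identity gives $\eta(\tau)=\sum_{m\ge1}\chi_{12}(m)q^{m^2/24}$, summing these residues over square $n$ recovers exactly the term $\eta(\tau)/(s-\tfrac34)$ subtracted in \eqref{eq:hdef}. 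The pole therefore cancels, the limit exists (statement (1)), and the square terms survive in the pole-subtracted form displayed in the last line of \eqref{eq:htau1} (statement (4)). I would organize this as: (i) regularity and the value $a(n,\tfrac34)$ for non-square $n$; (ii) isolation of the pole and computation of its residue for square $n$; (iii) justification of interchanging the limit with the Fourier summation, all via the estimates of Lemma~\ref{lem:ans}.

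For (2) and the Laplacian half of (3) I would use the fact that, for each $s$ in its domain, $P(\tau,s)$ transforms with weight $\tfrac12$ and multiplier $\chi$ (as does $\eta$ by \eqref{eq:chidef}), and is an eigenfunction of the operator \eqref{eq:def-deltak}: its seed $\sM(y,s)e_{24}(x)$ solves the Whittaker equation, so $\Delta_{\frac12}P(\tau,s)=\lambda(s)P(\tau,s)$ with $\lambda(s)=(s-\tfrac14)(\tfrac34-s)$. Writing $G(\tau,s):=c(s)P(\tau,s)-\eta(\tau)/(s-\tfrac34)$ and using $\Delta_{\frac12}\eta=0$ gives $\Delta_{\frac12}G=\lambda(s)G+\tfrac{\lambda(s)}{s-3/4}\,\eta$; letting $s\to\tfrac34$ and using $\lambda(\tfrac34)=0$ and $\lambda'(\tfrac34)=-\tfrac12$ yields $\Delta_{\frac12}\bh=\tfrac6\pi\cdot(-\tfrac12)\eta=-\tfrac3\pi\eta$. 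As $\eta$ is weakly holomorphic this shows $\bh$ is polyharmonic of depth $\tfrac32$, and passing the transformation law and real-analyticity through the locally uniform limit gives (2). For the shadow, $\Delta_{\frac12}=-\xi_{3/2}\xi_{\frac12}$ together with \eqref{eq:xi-F-eta} gives $\xi_{3/2}\big(\xi_{\frac12}\bh+2\sqrt6 F\big)=\tfrac3\pi\eta-\tfrac3\pi\eta=0$, so $\xi_{\frac12}\bh+2\sqrt6 F$ is weakly holomorphic of weight $\tfrac32$; applying $\xi_{\frac12}$ directly to the terms of \eqref{eq:htau1} (it annihilates the holomorphic $q^{n/24}$ and sends each $\beta_{\frac12}$-term and the principal term to the matching holomorphic term of $F$ in \eqref{eq:fdef}) shows this difference vanishes, giving $\xi_{\frac12}\bh=-2\sqrt6 F$.

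The principal obstacle is the behavior of the square-index coefficients at $s=\tfrac34$. The $c$-sum in \eqref{eq:ansdef} converges only for $\re(s)>\tfrac34$: by Lehmer's bound $|A_c(n)|\le 2^{\omega_0(c)}\sqrt c$ and the small-argument estimates $J_{2s-1}(x),\,I_{2s-1}(x)\asymp x^{2s-1}$ the terms are of size $c^{1/2-2s+\epsilon}$, so $s=\tfrac34$ is exactly the edge. The existence, regularity (for non-square $n$), and residue $\chi_{12}(m)$ (for $n=m^2$) of the analytic continuation there are the arithmetic heart of the matter; they encode the fact that the residue of the Poincar\'e series at the spectral point $s=\tfrac34$ is the cusp form $\eta$. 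I would extract this from the estimates of Lemma~\ref{lem:ans}, which simultaneously furnish the locally uniform convergence needed to interchange $\lim_{s\to 3/4}$ with both the Fourier summation and the operator $\Delta_{\frac12}$ in the steps above.
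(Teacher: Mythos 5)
Most of your proposal coincides with the paper's own proof: the term-by-term evaluation of the limit using $c(\tfrac34)=\tfrac{\sqrt\pi}{3}$ and the Whittaker degenerations at $s=\tfrac34$, the use of Lemma~\ref{lem:ans} both for the pole/residue structure at square index (whose sum over squares reproduces $\eta/(s-\tfrac34)$) and for the uniform bounds justifying the interchange of limit and summation, the inheritance of the transformation law from $P(\tau,s)$ and $\eta$, and the eigenvalue identity $\Delta_\frac12 P(\tau,s)=(s-\tfrac14)(\tfrac34-s)P(\tau,s)$ leading to $\Delta_\frac12\bh=-\tfrac3\pi\eta$. Your explicit bookkeeping with $G(\tau,s)=c(s)P(\tau,s)-\eta(\tau)/(s-\tfrac34)$ and $\lambda'(\tfrac34)=-\tfrac12$ is a correct way to organize the Laplacian computation, and your reduction of the shadow identity to showing that $D:=\xi_\frac12\bh+2\sqrt6\,F$ (which you correctly show is annihilated by $\xi_\frac32$) vanishes is exactly the paper's first step.

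The gap is in how you propose to show $D=0$. Your plan --- ``applying $\xi_\frac12$ directly to the terms of \eqref{eq:htau1} \dots sends each $\beta_\frac12$-term and the principal term to the matching holomorphic term of $F$'' --- cannot be carried out at this stage of the argument. For $n<0$, the claim that $2a(n,\tfrac34)\,\xi_\frac12\bigl(\beta_\frac12(\pi|n|y/6)\,q^{n/24}\bigr)$ equals the corresponding holomorphic term $-2\sqrt6\,s(\tfrac{1-n}{24})\,q^{-n/24}$ of $-2\sqrt6\,F$ is equivalent to \eqref{eq:a-n-34-neg} combined with the algebraic formula \eqref{eq:s-alg}: the former is only proved in Section~\ref{sec:nonsquare}, and the latter is a deep result of \cite{aa-spt} (indeed the paper emphasizes that its construction yields a \emph{new} proof of \eqref{eq:s-alg-0}, so it is not something one can read off here). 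Moreover, the non-holomorphic $\beta_\frac32$-part of $-2\sqrt6\,F$ is supported on negative square indices and would have to be matched against $\xi_\frac12$ applied to the square-index limit terms of \eqref{eq:htau1}, whose explicit form (Proposition~\ref{prop:square-coeffs} and Lemma~\ref{eq:lem-W-deriv-alpha}) is only derived in Section~\ref{sec:square-index}; your parenthetical does not address these terms at all. The paper closes the argument with a softer idea that your proposal is missing: one matches only the \emph{exponentially growing} terms --- the sole growing term of $\xi_\frac12\bh$ is $\xi_\frac12\bigl(i\beta_\frac12(-\pi y/6)\,q^{1/24}\bigr)=\tfrac1{\sqrt6}\,q^{-1/24}$, which agrees with the sole growing term of $-2\sqrt6\,F$ --- so that $D$ is holomorphic on $\H$ \emph{and} at $\infty$; one then invokes the fact that there are no nonzero holomorphic modular forms of weight $\tfrac32$ with multiplier $\bar\chi$ on $\SL_2(\Z)$ (for instance, $D\eta$ would be a weight~$2$ cusp form on $\SL_2(\Z)$, hence zero). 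Without this dimension-type ingredient, or the not-yet-available coefficient identities, your argument for $\xi_\frac12\bh=-2\sqrt6\,F$ does not close.
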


We require the following lemma, whose proof we leave to the end of this section.
\begin{lemma}\label{lem:ans}
There exists a positive constant $A$ such that the following are true.
\begin{enumerate}
\item
For $n<0$ we have
\begin{equation} \label{eq:b-n-s-3/4}
	a(n,s) = O \left( |n|^A \exp \left( \mfrac{\pi\sqrt{|n|}}6 \right)  \right) 
\end{equation}
uniformly for $s\in [\frac34, 1]$.
\item For $n>0$ we have
\begin{equation} \label{eq:a-n-s-3/4}
	a(n,s) =\chi_{12}(\sqrt n)\frac{3}{\sqrt\pi}\cdot \frac{1}{s-\frac 34} + O(n^A)
\end{equation}
uniformly for $s\in(\frac 34,1]$ if $n$ is square, and for $s\in [\frac34, 1]$ if $n$ is not square.
\end{enumerate}
\end{lemma}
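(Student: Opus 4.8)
The plan is to isolate, inside the $c$-sum defining $a(n,s)$ in \eqref{eq:ansdef}, the single piece that carries all of the analytic structure near $s=\tfrac34$, namely the Dedekind-sum Kloosterman Dirichlet series
\[
	\mathcal Z(m,s):=\sum_{c>0}\frac{A_c(m)}{c^{2s}},\qquad m:=\tfrac{1-n}{24}.
\]
First I would record that the archimedean prefactor $\frac{2\pi\Gamma(2s)}{|n|^{1/4}\Gamma(s+\frac{\sgn n}4)}$ is holomorphic and bounded for $s\in[\tfrac34,1]$, so it only contributes a factor $O(|n|^{-1/4})$ and never a pole. Writing $\nu=2s-1\in[\tfrac12,1]$ and splitting off the leading monomial of the Bessel function,
\[
	J_\nu(x)=\frac{(x/2)^\nu}{\Gamma(\nu+1)}+R_\nu(x),\qquad I_\nu(x)=\frac{(x/2)^\nu}{\Gamma(\nu+1)}+\widetilde R_\nu(x),
\]
and substituting $x=\tfrac{\pi\sqrt{|n|}}{6c}$, the leading monomial contributes exactly $\frac{(\pi\sqrt{|n|}/12)^{\nu}}{\Gamma(2s)}\,\mathcal Z(m,s)$ to the $c$-sum, while the remainders satisfy $R_\nu(x),\widetilde R_\nu(x)\ll x^{\nu+2}$ for $x\le1$.

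The remainder estimates are routine. Splitting the remainder sums at $c\asymp\sqrt{|n|}$ and using the Weil bound $|A_c(m)|\le 2^{\omega_0(c)}\sqrt c$, the large-$c$ range ($x\le1$) is controlled by $R_\nu(x),\widetilde R_\nu(x)\ll x^{\nu+2}$ and converges to $O(|n|^A)$; in the small-$c$ range ($x\ge1$) one uses $R_\nu(x)\ll x^{\nu}$ for $n>0$, giving $O(n^A)$, and $\widetilde R_\nu(x)\ll e^{x}x^{-1/2}$ for $n<0$, whose $c=1$ term produces the factor $\exp(\tfrac{\pi\sqrt{|n|}}{6})$ while the terms $c\ge2$ contribute at most $\exp(\tfrac{\pi\sqrt{|n|}}{12})$ and are absorbed into $|n|^A$. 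Thus for $n<0$ every piece other than the leading-monomial term is $\ll|n|^A\exp(\tfrac{\pi\sqrt{|n|}}{6})$, and for $n>0$ the remainder is $O(n^A)$; in both cases what survives is the single term proportional to $\mathcal Z(m,s)$.

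It therefore suffices to understand $\mathcal Z(m,s)$ near $s=\tfrac34$. I claim that $\mathcal Z(m,s)$ continues meromorphically past the line $\re s=\tfrac34$ (the edge of absolute convergence, since $|A_c(m)|\ll c^{1/2+\epsilon}$), is holomorphic and $\ll|n|^A$ at $s=\tfrac34$ unless $n=1-24m$ is a positive perfect square, and in that case has a simple pole at $s=\tfrac34$ for which, after reinstating the prefactor and the factor $(\pi\sqrt n/12)^{\nu}/\Gamma(2s)$, the residue of $a(n,s)$ equals $\chi_{12}(\sqrt n)\tfrac{3}{\sqrt\pi}$. Granting this, the case $n<0$ follows since $m>0$ forces $1-24m<0$, so $\mathcal Z(m,s)$ is regular and $\ll|n|^A$ at $s=\tfrac34$; the nonsquare $n>0$ case is identical; and the square case produces precisely $\chi_{12}(\sqrt n)\tfrac{3}{\sqrt\pi}\,(s-\tfrac34)^{-1}+O(n^A)$, the interval narrowing to $(\tfrac34,1]$ because of the pole.

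The main obstacle is thus the meromorphic continuation and residue computation for $\mathcal Z(m,s)$. The approach I would take is to evaluate the eta-multiplier Kloosterman sums $A_c(m)=K(0,-m,c)$ explicitly: because one index is $0$, these reduce to classical quadratic (Gauss/Sali\'e-type) exponential sums, and summing over $c$ factors $\mathcal Z(m,s)$ as a finite Euler product at the primes $2,3$ times a quotient of shifted zeta functions. The rightmost pole comes from a factor $\zeta(4s-2)$, whose simple pole at $s=\tfrac34$ has residue $\tfrac14$; the accompanying arithmetic factor is a character sum counting solutions of a quadratic congruence that vanishes unless $1-24m$ is a square, in which case it evaluates to a multiple of $\chi_{12}(\sqrt n)$. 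Tracking the constants through this evaluation---and checking them against the $\zeta(4s-1)$, $(2^{2s-1/2}-1)$ and $(3^{2s-1/2}-1)$ factors already appearing in $c(s)$ in \eqref{eq:csdef}---is where essentially all of the genuine work lies.
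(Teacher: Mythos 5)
Your reduction is sound as far as it goes, and it is essentially a Mellin-transformed version of what the paper does: the paper defines $S(n,x):=\sum_{c\le x}A_c\big(\tfrac{1-n}{24}\big)/c$ and runs partial summation against the full Bessel kernel, whereas you split off the leading Bessel monomial and reduce to the Dirichlet series $\mathcal Z(m,s)$; these carry the same information, and your constants even check out (if $\mathcal Z(m,s)$ has residue $\chi_{12}(\sqrt n)\tfrac{3\sqrt3}{\pi^2}$ at $s=\tfrac34$, reinstating the prefactor from \eqref{eq:ansdef} does give residue $\chi_{12}(\sqrt n)\tfrac{3}{\sqrt\pi}$ for $a(n,s)$). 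Your remainder estimates via the Weil bound and the small/large argument behavior of $J_\nu$, $I_\nu$ are also fine and parallel the paper's use of \eqref{eq:jbesselbound}--\eqref{eq:IBesselbound2}.

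The genuine gap is in the step you defer to the end. The paper does not prove the analytic input at all: it quotes Theorem~3 of \cite{aa-spt}, namely $S(n,x)=\chi_{12}(\sqrt n)\tfrac{12\sqrt3}{\pi^2}x^{1/2}+O_n(x^{1/6+\epsilon})$ (with the remark that the error depends at worst polynomially on $n$), and this asymptotic--with its power saving and its square-detecting main term--is precisely equivalent to the meromorphic continuation and residue of $\mathcal Z(m,s)$ that you need, uniformly in $n$. Your proposed route to it, however, would fail as described: the sums $A_c(m)=K(0,-m;c)$ do reduce, via the Selberg--Whiteman identity, to sums over roots of $(6\ell+1)^2\equiv n \pmod{24c}$, but each root is weighted by a $c$-dependent factor $(-1)^\ell\cos\big(\tfrac{(6\ell+1)\pi}{6c}\big)$, and these weights destroy multiplicativity in $c$. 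Consequently $\mathcal Z(m,s)$ does \emph{not} factor as a finite Euler product at $2,3$ times a quotient of shifted zeta functions; that clean mechanism belongs to the level-$4$ theta-multiplier (Sali\'e sum) setting you appear to be modeling this on, not to the eta multiplier. What actually happens is that the ``global'' roots $6\ell+1\equiv\pm\sqrt n$ produce a $\zeta(2s-\tfrac12)$-type main term (your $\zeta(4s-2)$ is the wrong factor, though it coincidentally also has its pole at $s=\tfrac34$), while the remaining roots must be shown to exhibit cancellation; that equidistribution estimate, uniform and polynomial in $n$, is the substance of Theorem~3 of \cite{aa-spt} and cannot be dispatched as constant-tracking. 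So either you invoke that theorem (as the paper does), or you owe a proof of it, which is a different and much harder argument than the one you sketch.
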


For the remainder of the section we   let $A$   denote a positive constant whose value is
allowed to change at each occurrence.  We assume that $\tau$ is in a fixed compact subset of the upper half plane
(in particular, that $y$ is bounded away from $0$).
The implied constants in the estimates which follow will depend on the particular choice of compact subset.

This estimate can be derived  from the sentence which follows (13.19.3) of   \cite{NIST:DLMF}:
\begin{equation}\label{eq:whittakerest}
\sW_{n}(y,s)\ll |n|^A\exp \left( -\mfrac{\pi |n| y}{12} \right) ,\ \ \ s\in [\tfrac34,1].
\end{equation}

\begin{proof}[Proof of Proposition~\ref{prop:hdef}]
Using \cite[(13.14.31), (13.18.2), (13.18.5), and (13.14.32)]{NIST:DLMF}, we have the evaluations
\begin{align}
	\sW_n\left(y,\mfrac 34\right) &=
	\begin{cases}
		e^{-\frac{\pi n y}{12}} & \text{ if }n>0, \\
		\sqrt{\pi}\beta_{\frac 12}\pmfrac{\pi |n|y}6  e^{-\frac{\pi n y}{12}} &\text{ if }n<0,
	\end{cases} \\
	\sM\left(y,\mfrac 34\right) &= -\mfrac{i\sqrt \pi}{2} \left(1- \beta_{\frac 12}\pmfrac{-\pi y}6\right) e^{-\frac{\pi y}{12}}.
\end{align}
We also have
\begin{equation}\label{eq:c34}
	c\pmfrac 34=\frac{\sqrt \pi}3.
\end{equation}
This gives the first term  in \eqref{eq:htau1}.
If  $n$ is not square then Lemma~\ref{lem:ans} and  \eqref{eq:whittakerest} give
\[a(n,s) \sW_{n}(y,s)\ll |n|^A \exp \left( -\mfrac{\pi |n| y}{12}+\mfrac{\pi \sqrt{|n|}}6 \right) ,\ \ \ s\in [\tfrac34,1].\]
This justifies the interchange which gives the first  two sums in \eqref{eq:htau1}.

For $n>0$, a straightforward computation using the integral representation  \cite[(9.222.1)]{GR-table}
\begin{equation}\label{eq:whittaker_integral}
	W_{\frac 14,s-\frac 12}(y) = \frac{y^s e^{-\frac y2}}{\Gamma(s-\frac 14)} \int_0^\infty e^{-yt} t^{s-\frac 54} (1+t)^{s-\frac 34} \, dt
\end{equation}
shows that  $\frac{\partial}{\partial s} \sW_n(y,s)$ also satisfies the bound \eqref{eq:whittakerest}.
Expanding at $s=3/4$ using this fact together with \eqref{eq:a-n-s-3/4},  \eqref{eq:whittakerest} and \eqref{eq:c34} gives
\[c(s)a(n,s) \sW_n(y,s)- \frac{\chi_{12}(\sqrt n)}{s-\frac 34} e^{-\frac{\pi n y}{12}}\ll  n^A\exp \left( \mfrac{-\pi n y}{12}  \right) ,\ \ \ s\in (\tfrac34,1].\]
This justifies the interchange in the sum containing the square coefficients,  and  gives assertions (1) and (4).

The transformation properties of $\bh(\tau)$
are inherited from those of $P(\tau, s)$ and $\eta(\tau)$.
From (1) we have
\[\lim_{s\to\frac34^+}(s-\tfrac 34)c(s)P(\tau, s)=\eta(\tau).\]
A computation involving \cite[(13.14.1)]{NIST:DLMF} shows that $\Delta_\frac12 P(\tau, s)=(s-\frac 14)(\frac 34-s) P(\tau, s)$; from this we conclude that $\Delta_\frac12 \bh(\tau)=-\frac{3}\pi \eta(\tau)$
(to interchange the limit and the derivatives  requires uniform convergence, which follows as above).
From \eqref{eq:def-deltak} and \eqref{eq:xi-F-eta} we have
\[\xi_\frac32 \left( \xi_\frac12 \bh(\tau) \right) =\xi_\frac32 \left( -2\sqrt 6 F(\tau) \right) .\]
Now $\xi_\frac12 \bh(\tau)$ is a weak harmonic Maass form of weight $3/2$ and multiplier $\overline\chi$ on $\SL_2(\Z)$.
Using \eqref{eq:beta_to_gamma} and \eqref{eq:gammastardef} we find that the only exponentially growing term in its expansion is
\[\xi_\frac12 \left( i\beta_{\frac 12}\pmfrac{-\pi y}6q^\frac1{24} \right) =\frac{1}{\sqrt 6} \,q^{-\frac1{24}}.\]
From \eqref{eq:fdef} we conclude that $\xi_\frac12 \bh(\tau)=-2\sqrt 6F(\tau)$, since there are no modular forms of weight  $3/2$ on $\SL_2(\Z)$ with multiplier $\overline\chi$ which
are holomorphic both on $\mathbb H$ and at $\infty$.
This finishes the proof of the proposition.
\end{proof}

We turn to the proof of Lemma~\ref{lem:ans}.
\begin{proof}[Proof of Lemma~\ref{lem:ans}]
Define
\[S(n, x):=\sum_{c\leq x}\frac{A_c\pfrac{1-n}{24}}{c}.\]
By Theorem 3 of \cite{aa-spt},
we have the asymptotic formula
\begin{equation*}
	S(n,x) = \chi_{12}(\sqrt n) \mfrac{12\sqrt 3}{\pi^2} x^{\frac 12} + O_n(x^{\frac 16+\epsilon})
\end{equation*}
for any $\epsilon>0$.
While the $n$-dependence in the error term is not given explicitly in \cite{aa-spt}, a straightforward modification of the proof (following arguments given in, e.g., \cite{pribitkin}) shows that error term depends at worst polynomially on $n$.
Taking $\epsilon=\frac 1{12}$, we conclude that
\begin{equation}\label{eq:snx}
	S(n,x) = \chi_{12}(\sqrt n) \mfrac{12\sqrt 3}{\pi^2} x^{\frac 12} + O(|n|^A x^{\frac 14}).
\end{equation}

Suppose first that $n>0$. We require the facts that
\begin{equation}\label{eq:jbesselbound}
|J_\nu(y)|\leq \frac{|\frac12y|^\nu}{\Gamma(\nu+1)},\qquad \nu \geq -\tfrac12,
\end{equation}
\begin{equation}\label{eq:jprime}
J_\nu'(y)=-J_{\nu+1}(y)+\frac{\nu}y J _\nu(y),
\end{equation}
\begin{equation}\label{eq:jpos}
|J_\nu(y)|\leq 1,\qquad  y\in \R, \  \nu\geq0,
\end{equation}
and that $J_\nu(y)$ decays at $\infty$
\cite[(10.14.4), (10.6.2), (10.14.1), (10.17.3)]{NIST:DLMF}.
Combining  \eqref{eq:jbesselbound}  and \eqref{eq:jprime},
we obtain, for $t\geq 1$,
\begin{equation}\label{eq:jderiv_est}
\left[J_{2s-1}\pmfrac{\pi \sqrt n}{6t}\right]'\ll\frac{n^{A}}{t^{2s}}.
\end{equation}
Then, for  $s\in (3/4, 1]$,
partial summation together with  \eqref{eq:jbesselbound}  and \eqref{eq:jprime} gives
\begin{equation}\label{eq:ACsquare}
\sum_{c>0} \frac{A_c\pfrac{1-n}{24}}{c} J_{2s-1}\pmfrac{\pi \sqrt n}{6c}=
-\int_1^\infty S(n,t)\left[J_{2s-1}\pmfrac{\pi \sqrt n}{6t}\right]'\, dt.
\end{equation}
We first consider the case when $n$ is square.
From \eqref{eq:ansdef},  \eqref{eq:ACsquare} and \eqref{eq:snx} we obtain
\begin{equation*}
\begin{aligned}
a(n,s)&=  -\chi_{12}(\sqrt n) \frac{24\sqrt 3}{n^\frac14\pi} \frac{ \Gamma(2s)}{\Gamma(s+\frac14)}
\int_1^\infty \sqrt t \left[J_{2s-1}\pmfrac{\pi \sqrt n}{6t}\right]'\, dt+O(n^A).
\end{aligned}
\end{equation*}
Integrating by parts using \eqref{eq:jbesselbound} and \eqref{eq:jpos}, and then using the evaluation \cite[(10.22.43)]{NIST:DLMF}, the integral in the last line becomes
\begin{equation*}\label{eq:intevaluate}
 \begin{aligned}
-\mfrac12\int_1^\infty t^{-\frac12}\,  J_{2s-1}\pmfrac{\pi \sqrt n}{6t}\, dt+O(1)
&=-\mfrac12\int_0^\infty t^{-\frac12}\,  J_{2s-1}\pmfrac{\pi \sqrt n}{6t}\, dt+O(1)\\
&=-\mfrac 18 \sqrt{\mfrac \pi 3} \, n^{\frac 14} \frac{\Gamma(s-\frac 34)}{\Gamma(s+\frac 34)}+O(1),
\end{aligned}
\end{equation*}
from which we obtain
\[
a(n,s)=  \chi_{12}(\sqrt n)\frac{3}{\sqrt \pi} \cdot \frac{ \Gamma(2s)\Gamma(s-\frac34)}{\Gamma(s+\frac14)\Gamma(s+\frac34)}+O(n^A).\]
From the expansion  $\frac{ \Gamma(2s)\Gamma(s-\frac34)}{\Gamma(s+\frac14)\Gamma(s+\frac34)}=\frac1{s-3/4}+O(1)$
for $s\in (\frac34, 1]$,
we obtain
 \eqref{eq:a-n-s-3/4} in this case.
If $n$ is not square, then \eqref{eq:ACsquare} holds for $s\in[3/4,1]$, and the result follows from \eqref{eq:snx}.

The case when $n<0$  is similar.  We require the facts that for fixed $\nu>0$ we have
\begin{equation}\label{eq:IBesselbound1}
I_\nu(x)\ll\frac {x^\nu}{\Gamma(\nu+1)}\qquad \text{as $x\to 0$},
\end{equation}
 that
\begin{equation}\label{eq:IBesselbound2}
I_\nu(x)  \ll\frac {e^x}{\sqrt{x}}\qquad \text{as $x\to \infty$},
\end{equation}
that
\[I_\nu'(x)=I_{\nu+1}(x)+\frac{\nu}x I _\nu(x),\]
and that $I_\nu(x)$ is increasing as a function of  $x$
\cite[(10.30.1), (10.30.4), (10.29.2), (10.37)]{NIST:DLMF}.
We break the integral analogous to \eqref{eq:ACsquare} at $t=\sqrt{|n|}$.
The first part of the integral is $O\big(|n|^A \exp(\pi\sqrt{|n|}/6)\big)$.
Using  \eqref{eq:IBesselbound1} we find that
the second part is $O \left( |n|^A \right) $.  This gives \eqref{eq:b-n-s-3/4}.
\end{proof}

\section{Poincar\'e series and the coefficients of nonsquare index} \label{sec:nonsquare}

In this section we relate the coefficients $a(n,\frac 34)$ of non-square index appearing in Proposition~\ref{prop:hdef} to the traces $\Tr_n(f)$ defined in \eqref{eq:def-trace-non-square}.
We first define a function $f(\tau,s)$ in terms of a Maass-Poincar\'e series which, analytically continued, specializes to $f(\tau)$ at $s=1$.
For $r\mid 6$, define the Atkin-Lehner matrix $W_r$ by
\[
	W_1 = \pMatrix 1001, \, W_2 =\frac 1{\sqrt 2} \pMatrix2{-1}6{-2}, \, W_3 = \frac 1{\sqrt 3} \pMatrix3163, \, W_6 = \frac1{\sqrt 6} \pMatrix 0{-1}60.
\]
Then
\begin{equation} \label{eq:atkincompose}
W_d W_{d'}=W_\frac{d d'}{(d,d')^2}.
\end{equation}
Recall that $\Gamma=\Gamma_0(6)/\{\pm1\}$.
Following Section~4 of \cite{aa-spt} we define
\begin{equation}
	f(\tau,s) := \sum_{r\mid 6} \mu(r) \sum_{\gamma \in \Gamma_\infty \backslash \Gamma} \phi_s(\im \gamma W_r \tau) e(-\re\gamma W_r\tau), \qquad \re(s)>1,
\end{equation}
where $\mu$ is the M\"obius function and
\begin{equation*}
	\phi_s(y) := 2\pi \sqrt y I_{s-\frac 12}(2\pi y).
\end{equation*}
The function $f(\tau,s)$
satisfies
\begin{equation*}
	f(\gamma \tau,s) = f(\tau,s) \qquad \text{ for all }\gamma\in \Gamma_0(6)
\end{equation*}
and
\begin{equation}
	f(W_r\tau,s) = \mu(r) f(\tau,s) \qquad \text{ for } r\mid 6.
\end{equation}
We also have
\begin{equation}
	\Delta_0 f(\tau,s) = s(1-s) f(\tau,s).
\end{equation}
As shown in \cite[\S 4]{aa-spt}, the function $f(\tau,s)$ has an analytic continuation to $\re(s)>\frac 34$ and
\[
	f(\tau,1)=f(\tau).
\]

In the next section we will need the Fourier expansion of $f(\tau,s)$.
By Proposition 5 of \cite{aa-spt} we have
\begin{equation} \label{eq:f-tau-s-fourier}
	f(\tau,s) = 2\pi\sqrt y I_{s-\frac 12}(2\pi y) e(-x) + a_s(0) y^{1-s} + 2\sqrt y \sum_{n\neq 0} a_s(n) K_{s-\frac 12}(2\pi|n|y) e(nx),
\end{equation}
where
\begin{equation}\label{eq:aszero}
	a_s(0) = \frac{2\pi^{s+1}}{(s-\frac 12)\Gamma(s)} \sum_{r\mid 6} \mu(r) \sum_{\substack{0<c\equiv 0(6/r) \\ (c,r)=1}} \frac{k(-\bar r,0;c)}{(c\sqrt r)^{2s}}.
\end{equation}
Here $k(a,b;c)$ is the ordinary Kloosterman sum.
Exact formulas for the coefficients $a_s(n)$, $n\neq 0$, are given in \cite{aa-spt}, but we will need only  the crude estimate
\[
	a_s(n) \ll e^{6\pi\sqrt n} \qquad \text{ uniformly for } s\in [1, \tfrac 32],
\]
from \cite[(4.5)]{aa-spt}.
The constant coefficient $a_s(0)$ simplifies in the following way.
\begin{lemma} \label{eq:as0-simple}
For $\re(s)>\frac 12$ we have
\begin{equation} \label{eq:as0}
	a_s(0) = \frac{4\pi^{s+1}}{(2s-1)\Gamma(s)(2^s-1)(3^s-1)\zeta(2s)}.
\end{equation}
\end{lemma}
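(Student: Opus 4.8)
The plan is to recognize the inner sum in \eqref{eq:aszero} as a Ramanujan sum and then evaluate the resulting Dirichlet series explicitly. First I would observe that the ordinary Kloosterman sum appearing there has its second argument equal to zero, so
\[
k(-\bar r, 0; c) = \sum_{\substack{d\bmod c\\(d,c)=1}} e\pfrac{-\bar r d}{c}
\]
is a Ramanujan sum. Because the summation in \eqref{eq:aszero} is restricted to $c$ with $(c,r)=1$, the residue $-\bar r$ is a unit modulo $c$, so the map $d\mapsto -\bar r d$ permutes the reduced residues modulo $c$; hence the sum collapses to $\sum_{(d,c)=1} e(d/c)=\mu(c)$. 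Substituting this and pulling $r^{-s}$ out of $(c\sqrt r)^{-2s}$ reduces \eqref{eq:aszero} to
\[
a_s(0) = \frac{2\pi^{s+1}}{(s-\frac12)\Gamma(s)} \sum_{r\mid 6} \frac{\mu(r)}{r^{s}} \sum_{\substack{0<c\equiv 0(6/r)\\(c,r)=1}} \frac{\mu(c)}{c^{2s}}.
\]

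Next I would evaluate the four inner sums. For each divisor $r\mid 6$ the congruence $c\equiv 0\ (6/r)$ together with $(c,r)=1$ forces every contributing (squarefree) $c$ to factor as $c=(6/r)\,m$ with $(m,6)=1$; multiplicativity of $\mu$ then expresses each inner sum as a scalar multiple of the single quantity
\[
G(s):=\sum_{(m,6)=1}\frac{\mu(m)}{m^{2s}}=\prod_{p\neq2,3}\bigl(1-p^{-2s}\bigr)=\frac{1}{\zeta(2s)\,(1-2^{-2s})(1-3^{-2s})},
\]
which is absolutely convergent for $\re(s)>\frac12$. Carrying out the four cases $r=1,2,3,6$ and collecting the signs coming from $\mu(r)$ and from $\mu((6/r)m)=\mu(6/r)\mu(m)$, the bracketed total becomes $G(s)$ times $2^{-2s}3^{-2s}+2^{-s}3^{-2s}+2^{-2s}3^{-s}+2^{-s}3^{-s}$.

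Finally I would factor this last expression. Writing $a=2^{-s}$ and $b=3^{-s}$, it equals $ab(ab+a+b+1)=ab(1+a)(1+b)$, and since $G(s)$ carries the denominators $1-a^2=(1-a)(1+a)$ and $1-b^2=(1-b)(1+b)$, the factors $(1+a)(1+b)$ cancel, leaving $\dfrac{ab}{\zeta(2s)(1-a)(1-b)}$. Combining this with $\tfrac{2}{s-1/2}=\tfrac{4}{2s-1}$ and with $\tfrac{2^{-s}3^{-s}}{(1-2^{-s})(1-3^{-s})}=\tfrac{1}{(2^s-1)(3^s-1)}$ then yields \eqref{eq:as0} exactly. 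The only real obstacle is bookkeeping: keeping the four arithmetic restrictions, the two sources of M\"obius signs, and the prime Euler factors straight, and confirming that the rearrangement into the product $G(s)$ is legitimate, which it is precisely because $\re(2s)>1$ on the stated domain.
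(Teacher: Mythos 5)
Your proposal is correct and follows essentially the same route as the paper's own proof: you evaluate $k(-\bar r,0;c)$ as a Ramanujan sum equal to $\mu(c)$, substitute $c=(6/r)m$ with $(m,6)=1$ to reduce everything to the Euler product $\sum_{(m,6)=1}\mu(m)m^{-2s}=\zeta(2s)^{-1}(1-2^{-2s})^{-1}(1-3^{-2s})^{-1}$, and then sum over $r\mid 6$. The paper's version is merely terser, using $\mu(r)\mu(6/r)=\mu(6)=1$ and $\sum_{r\mid 6}r^s=(1+2^s)(1+3^s)$ where you expand into the four terms in $2^{-s},3^{-s}$ and factor; the final cancellations are identical.
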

\begin{proof}
Evaluating the Kloosterman sums  in \eqref{eq:aszero} gives
\begin{equation}
\sum_{\substack{0<c\equiv 0(6/r) \\ (c,r)=1}} \frac{k(-\bar r,0;c)}{(c\sqrt r)^{2s}} = \frac{\mu(\frac6r)r^s}{6^{2s}} \sum_{\substack{c>0 \\ (c,6)=1}} \frac{\mu(c)}{c^{2s}},
\end{equation}
where we have replaced $c$ by $\frac{6c}r$ and used the fact that $\mu(\frac{6c}r)=0$ unless $(c,\frac6r)=1$.
Therefore
\[
\sum_{r\mid 6} \mu(r) \sum_{\substack{0<c\equiv 0(6/r) \\ (c,r)=1}} \frac{k(-\bar r,0;c)}{(c\sqrt r)^{2s}} = \frac{1}{6^{2s}} \sum_{r\mid 6} r^s \sum_{\substack{c>0 \\ (c,6)=1}} \frac{\mu(c)}{c^{2s}} = \frac1{(2^s-1)(3^s-1)\zeta(2s)},
\]
and the lemma follows.
\end{proof}

We will write the coefficients $a(n,\frac 34)$ in terms of the traces $\Tr_n(f)$ by modifying the proof of Proposition~7 of \cite{andersen-singular}.
We begin by noting that $f(\tau,s)$ is related to the function $P_1(\tau,s)$ defined in (2.7) of that paper by
\[
	P_1(\tau,2s-\tfrac 12) = \frac{2\Gamma(s+\frac 14)}{\sqrt{\pi} \Gamma(s-\frac 14)} f(\tau,2s-\tfrac 12).
\]
Suppose first that $n<0$.
Following the proof of \cite[Proposition~7]{andersen-singular}, we find that
\begin{equation*}
	|n|^{-\frac 14} \sum_{Q\in \Gamma\backslash \sQ_n} P_1(\tau_Q,s) = \frac{2\sqrt{2\pi}\Gamma(\frac{s+1}{2})}{\Gamma(\frac s2)} |n|^{-\frac 14} \sum_{\substack{Q\in \Gamma_\infty \backslash \sQ_n \\ Q=[a,b,c]}} \pmfrac{12}{b} a^{-\frac 12} I_{s-\frac 12}\pfrac{\pi\sqrt {|n|}}{a} e\pmfrac{b}{2a}.
\end{equation*}
Then the argument which follows \cite[(5.15)]{andersen-singular} shows that
\begin{equation*}
	|n|^{-\frac 14} \sum_{Q\in \Gamma\backslash \sQ_n} P_1(\tau_Q,s) = \frac{4\sqrt{\pi} \,\Gamma(\frac{s+1}{2})}{\Gamma(\frac s2)} |n|^{-\frac 14} \sum_{c>0} \frac{A_c(\frac{1-n}{24})}{c}I_{s-\frac 12}\pfrac{\pi\sqrt{|n|}}{6c}.
\end{equation*}
It follows that
\begin{equation} \label{eq:a-n-34-neg}
	a(n,\tfrac 34) = \mfrac 1{2\sqrt{|n|}} \sum_{Q\in \Gamma\backslash \sQ_{n}} f(\tau_Q) \qquad \text{ if }n<0.
\end{equation}
For nonsquare $n>0$ we can apply \cite[Proposition~7]{andersen-singular} directly, and we find that
\begin{equation} \label{eq:a-n-34-pos}
	a(n,\tfrac 34) =\mfrac{1}{4\sqrt\pi} \sum_{Q\in \Gamma\backslash \sQ_n} \int_{C_Q} f(\tau) \, \frac{d\tau}{Q(\tau,1)} \qquad \text{ if }n>0.
\end{equation}

\section{The square-indexed coefficients and the proof of Theorem~\ref{thm:h}} \label{sec:square-index}

To describe the square-indexed coefficients, we define the dampened functions $f_Q(\tau)$ which appear in \eqref{eq:fqtil}.
Suppose that $n\equiv 1\pmod{24}$ is  square and let $Q\in \sQ_n$.
Then $Q(x,y)=0$ has two rational roots which correspond to cusps $\mathfrak a_1$ and $\mathfrak a_2$ in $\mathbb{P}^1(\Q)$, and $C_Q$ is defined as the geodesic connecting $\mathfrak a_1$ and $\mathfrak a_2$.
For each $i$ there is a unique $r_i$ and a unique $\gamma_i\in \Gamma_{\infty} \backslash \Gamma$ such that
\[
	\gamma_i W_{r_i} \mathfrak a_i = \infty.
\]
Following the method of \cite{andersen-singular} (see (3.13) of that paper) we define
\begin{equation}\label{eq:fqtildef}
	\tilde f_Q(\tau,s) := \sum_{r\mid 6} \mu(r) \sum_{\substack{\gamma \in \Gamma_\infty \backslash \Gamma \\ \gamma W_r \mathfrak a_i\neq \infty}} \phi_s(\im \gamma W_r \tau) e(-\re\gamma W_r\tau).
\end{equation}
With the notation of \cite{andersen-singular} we have
\[
	P_{1, Q}(\tau,2s-\tfrac 12) = \frac{2\Gamma(s+\frac 14)}{\sqrt{\pi} \Gamma(s-\frac 14)} \tilde f_Q(\tau,2s-\tfrac 12).
\]
Let $\Gamma^*$ be the group generated by $\Gamma$ and the Atkin-Lehner involutions $W_r$ for $r\mid 6$.
Matrices $\gamma=\pmatrix abcd\in \Gamma^*$ act on $Q\in \mathcal Q_n$ by
\[\gamma Q(x,y)=Q(dx-by, -cx+ay).\]
Then for $\gamma\in \Gamma^*$ we have
\begin{equation}\label{eq:gammaQroots}
\gamma\tau_Q=\tau_{\gamma Q}
\end{equation}
and
\begin{equation}\label{eq:gammaQdiff}
\frac{d(\gamma\tau)}{\gamma Q(\gamma\tau,1)}=\frac{d\tau}{Q(\tau,1)}.
\end{equation}
For $\sigma\in \Gamma_0(6)$ we have
\begin{equation} \label{eq:til-f-Q-tran}
	\tilde f_{\sigma Q}(\sigma\tau,s) = \tilde f_Q(\tau,s),
\end{equation}
and, for $r\mid 6$, \eqref{eq:atkincompose} gives
\begin{equation} \label{eq:til-f-Q-W-tran}
	\tilde f_{W_r Q}(W_r\tau,s) = \mu(r)\tilde f_Q(\tau,s).
\end{equation}

As  will be seen in the proof of Proposition~\ref{prop:square-coeffs} below, the function $a(n,s)$ has a pole at $s=\frac 34$ which arises from integrating the constant term in the Fourier expansion of $\tilde f_Q(\tau,s)$.
Motivated by this, we define
\begin{equation}
	f_Q(\tau,s) := \tilde f_Q(\tau,s) - a_s(0) y^{1-s}.
\end{equation}
In particular, with $f_Q(\tau):=f_Q(\tau, 1)$ and $\tilde f_Q(\tau):=\tilde f_Q(\tau,1)$, Lemma~\ref{eq:as0-simple} gives
\begin{equation}\label{eq:fqdef}
	f_Q(\tau) = \tilde f_Q(\tau) - 12.
\end{equation}
The next result gives the evaluation of the coefficients of square index.
\begin{proposition}\label{prop:square-coeffs}
Suppose that $n\equiv 1\pmod{24}$ is  square. Let $\Tr_n(f)$  and $h^*(n)$ be defined by \eqref{eq:fqtil} and \eqref{eq:hurwitz}.
Then
\begin{equation}
	c(s) a(n,s) =  \frac{\chi_{12}(\sqrt n)}{s-\frac 34} +  \frac{2\pi}{\sqrt n}\chi_{12}(\sqrt n) h^*(n)
	 + \frac{\pi}{6} \Tr_n(f) + O\left(s-\tfrac 34\right)
\end{equation}
uniformly for $s\in (\frac 34,1]$.
\end{proposition}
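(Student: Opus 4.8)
The plan is to adapt the unfolding argument behind \eqref{eq:a-n-34-pos} (the proof of Proposition~7 of \cite{andersen-singular}) to the case of square discriminant, where the geodesics $C_Q$ are infinite and the naive trace integral diverges. I first recall that the substitution $s\mapsto 2s-\tfrac12$ turns the Bessel index $s-\tfrac12$ in $f(\tau,s)$ into the index $2s-1$ appearing in $a(n,s)$, so that $s=\tfrac34$ corresponds to $f(\tau,1)=f(\tau)$. Following the derivation of \eqref{eq:a-n-34-pos}, but applied to the dampened series $\tilde f_Q$ of \eqref{eq:fqtildef} (whose construction removes the exponentially growing terms at the two cusps of $C_Q$), I would establish an identity of the shape
\[
c(s)\,a(n,s) = \kappa(s)\sum_{Q\in\Gamma\backslash\sQ_n}\int_{C_Q}\tilde f_Q(\tau,2s-\tfrac12)\,\frac{d\tau}{Q(\tau,1)}
\]
with an explicit $s$-dependent unfolding factor $\kappa(s)$; comparison with the nonsquare case, where \eqref{eq:a-n-34-pos} and \eqref{eq:c34} force $c(\tfrac34)a(n,\tfrac34)=\tfrac{\pi}{6}\Tr_n(f)$, fixes $\kappa(\tfrac34)=\tfrac{1}{4\sqrt\pi}$. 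Because the full $\tilde f_Q$-integral is only conditionally convergent near the infinite cusps, this identity must be read through analytic continuation from a half-plane where the constituent pieces converge, exactly as in \cite{andersen-singular}.

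Next I would split the Fourier expansion \eqref{eq:f-tau-s-fourier} of $\tilde f_Q$ using $\tilde f_Q(\tau,w)=f_Q(\tau,w)+a_w(0)\,y^{1-w}$ with $w=2s-\tfrac12$. The $f_Q$-piece is the analytic one: its coefficients decay at the cusps of $C_Q$, so the integral converges and is holomorphic in a neighborhood of $s=\tfrac34$, and one may pass to the limit $s\to\tfrac34^+$ under the sum and integral (the requisite uniformity follows from the estimates of Section~\ref{sec:nonsquare} and \eqref{eq:whittakerest}). Using $f_Q(\tau,1)=f_Q(\tau)$ from \eqref{eq:fqdef} together with the definition \eqref{eq:fqtil} of $\Tr_n(f)$, this piece contributes exactly $\tfrac{\pi}{6}\Tr_n(f)$ and carries no pole. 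Hence the entire singular part and the $h^*(n)$ term must originate from the constant-term piece $a_{2s-1/2}(0)\,y^{3/2-2s}$.

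The heart of the argument is the explicit evaluation of the constant-term geodesic integral. Parametrizing $C_Q$ for $Q=[a,b,c]\in\sQ_n$ (with $a>0$) as the semicircle through the rational roots $\tfrac{-b\pm\sqrt n}{2a}$ by $\tau=\tfrac{-b}{2a}+\tfrac{\sqrt n}{2a}e^{i\theta}$, one computes $\tfrac{d\tau}{Q(\tau,1)}=\tfrac{d\theta}{\sqrt n\,\sin\theta}$ and $y=\tfrac{\sqrt n}{2a}\sin\theta$, so that (for $\re(s)<\tfrac34$, then by continuation)
\[
\int_{C_Q} y^{\frac32-2s}\,\frac{d\tau}{Q(\tau,1)} = \frac{1}{\sqrt n}\pmfrac{\sqrt n}{2a}^{\frac32-2s}\int_0^\pi (\sin\theta)^{\frac12-2s}\,d\theta = \frac{\sqrt\pi}{\sqrt n}\pmfrac{\sqrt n}{2a}^{\frac32-2s}\frac{\Gamma(\frac34-s)}{\Gamma(\frac54-s)}.
\]
The factor $\Gamma(\tfrac34-s)$ supplies the pole at $s=\tfrac34$, while $\pmfrac{\sqrt n}{2a}^{\frac32-2s}=1-2(s-\tfrac34)\log\tfrac{\sqrt n}{2a}+\cdots$ produces the logarithms of the leading coefficients. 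The coefficient multiplying this integral, namely $c(s)\kappa(s)a_{2s-1/2}(0)$, collapses to a ratio of gamma functions: already $c(s)\,a_{2s-1/2}(0)=\tfrac{2\sqrt\pi\,\Gamma(s-\frac14)}{6^{\,s-\frac34}\Gamma(2s)}$, a clean expression in which all the $\zeta$- and Euler-type factors of \eqref{eq:csdef} and Lemma~\ref{eq:as0-simple} cancel identically. This cancellation is precisely what removes every spurious pole and leaves the single pole coming from $\Gamma(\tfrac34-s)$, whose residue I would check against Lemma~\ref{lem:ans} and $c(\tfrac34)=\tfrac{\sqrt\pi}{3}$ to recover $\tfrac{\chi_{12}(\sqrt n)}{s-\frac34}$.

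I expect the main obstacle to be the arithmetic reorganization of the finite part. One must sum the logarithms $\log\tfrac{\sqrt n}{2a}$ (together with the $\gamma$ and $\Gamma'/\Gamma$ contributions from expanding $\Gamma(\tfrac34-s)/\Gamma(\tfrac54-s)$) over a complete set of representatives for $\Gamma\backslash\sQ_n$ and recognize the result, via the stratification of $\sQ_n$ by square divisors $\ell^2\mid n$, as the divisor sum $\tfrac{1}{2\pi}\sum_{\ell^2\mid n}R(n/\ell^2)h(n/\ell^2)$ defining $h^*(n)$ in \eqref{eq:hurwitz} — with the extended regulator $R(d)=2\log\sqrt d$ for square $d$ emerging exactly from these leading-coefficient logarithms. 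Verifying that the arithmetic weight in the residue collapses to the single value $\chi_{12}(\sqrt n)$, and that all normalizing constants align so that the finite part is precisely $\tfrac{2\pi}{\sqrt n}\chi_{12}(\sqrt n)\,h^*(n)$, is the delicate bookkeeping; by contrast, the analytic inputs (convergence of the unfolded integral, uniformity in $s\in(\tfrac34,1]$, and legitimacy of interchanging limit, sum, and integral) are routine given the estimates already recorded in the excerpt.
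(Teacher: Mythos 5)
Your global frame does match the paper's: the unfolding identity from \cite{andersen-singular} with $\kappa(\tfrac34)=\tfrac1{4\sqrt\pi}$, the cancellation $c(s)\,a_{2s-\frac12}(0)=\tfrac{2\sqrt\pi\,\Gamma(s-\frac14)}{6^{s-3/4}\Gamma(2s)}$ (correct, and exactly the paper's identity $\tfrac{\Gamma(2s)}{2\sqrt\pi\,\Gamma(s-\frac14)}6^{s-\frac34}c(s)a_{2s-\frac12}(0)=1$), and the expectation that the pole comes from the constant term while $h^*(n)$ comes from logarithms of leading coefficients. But the central analytic step --- subtracting the \emph{global} constant term $a_w(0)y^{1-w}$ along the semicircular geodesics and treating the two pieces separately --- fails. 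Near a real endpoint $\mathfrak a_i$ of $C_Q$ the growth of $\tilde f_Q(\tau,w)$ is governed by the constant term in the \emph{local} coordinate at that cusp, $\tilde f_Q(\tau,w)\sim\mu(r_i)\,a_w(0)\,\bigl(\im \gamma_iW_{r_i}\tau\bigr)^{1-w}$, and along the semicircle $\im\gamma_iW_{r_i}\tau\asymp\theta^{-1}$ while $y\asymp\theta$. Hence the integrand of $\int_{C_Q}\tilde f_Q\,\frac{d\tau}{Q}$ behaves like $\theta^{w-2}\,d\theta$ (convergent only for $\re s>\tfrac34$), that of $\int_{C_Q}y^{1-w}\,\frac{d\tau}{Q}$ like $\theta^{-w}\,d\theta$ (convergent only for $\re s<\tfrac34$ --- which is precisely why your beta integral needed $\re s<\tfrac34$), and your ``$f_Q$-piece'' contains both incompatible powers, so its integral converges for \emph{no} $s$ near $\tfrac34$. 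There is no region in which your decomposition is an identity of convergent integrals, so the claim that the $f_Q$-piece ``converges and is holomorphic in a neighborhood of $s=\tfrac34$'' is false, and gluing continuations from opposite sides of the wall $\re s=\tfrac34$ is unjustified.

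The damage shows up in the numbers. Proceeding formally, $\kappa(s)c(s)a_{2s-\frac12}(0)=6^{\frac34-s}$, so your pole term is $6^{\frac34-s}\tfrac{\sqrt\pi}{\sqrt n}\tfrac{\Gamma(\frac34-s)}{\Gamma(\frac54-s)}\sum_Q\bigl(\tfrac{\sqrt n}{2a}\bigr)^{\frac32-2s}$; since $\Gamma(\tfrac34-s)=-\tfrac1{s-3/4}+O(1)$ and $|\Gamma\backslash\sQ_n|=\sqrt n$ by Lemma~\ref{lem:square-disc-set}, its residue is $-1$, not $\chi_{12}(\sqrt n)$: the sign is wrong (continuation from the wrong side) and the character is absent (it lives in the factors $\mu(r_i)=\chi_{12}(\sqrt n)$ relating the local constant terms to $a_w(0)$, which a global subtraction never sees). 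Note also that $y^{\frac32-2s}$ is not $\Gamma$-invariant, so $\sum_Q\log\tfrac{\sqrt n}{2a}$ depends on the chosen representatives and cannot canonically produce $h^*(n)$. The paper circumvents all of this by a step you omit: it first uses Lemma~\ref{lem:square-disc-set} and the Atkin--Lehner equivariance \eqref{eq:til-f-Q-W-tran} to replace the representatives by the forms $[0,b,c]$ (this is where $\chi_{12}(\sqrt n)=\mu(r)$ enters), whose geodesics are vertical lines, then folds the bottom half of each line up to a neighborhood of $i\infty$ by an explicit $\gamma_cW_6\in\Gamma^*$ (equation \eqref{eq:int-ell-n}); only after this is $a_w(0)y^{1-w}$ subtracted, on rays where it matches the true growth, so the remainders converge uniformly on $[\tfrac34,1]$ and the constant-term piece yields $\tfrac{(b')^{2s-3/2}}{s-3/4}$, whose expansion gives $h^*(n)$ via \eqref{eq:hstar}. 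Your argument needs this reduction-and-folding step (or an equivalent local-coordinate regularization at each endpoint) to be salvageable.
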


To prove the proposition  we will need the following lemma, which describes a set of representatives for $\Gamma\backslash\sQ_{n}$ when $n$ is  square.
We omit the proof, as it follows along the same lines as the proof of \cite[Lemma~3]{andersen-inf-geo}.

\begin{lemma} \label{lem:square-disc-set}
Suppose that $n=b^2$ with $(b,6)=1$.
Then
\begin{equation}
	\Gamma\backslash \sQ_{n} \cong \big\{W_r [0,b,c] : c\bmod b\big\},
\end{equation}
where
\begin{equation*}
	r=
	\begin{cases}
		1 & \text{ if } b\equiv 1\pmod{12}, \\
		2 & \text{ if } b\equiv 7\pmod{12}, \\
		3 & \text{ if } b\equiv 5\pmod{12}, \\
		6 & \text{ if } b\equiv 11\pmod{12}. \\
	\end{cases}
\end{equation*}
\end{lemma}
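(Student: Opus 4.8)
The plan is to enumerate the $\Gamma$-orbits of $\sQ_n$ by moving one root of each form to the cusp $\infty$, where forms take the simple shape $[0,b,c]$, and to track the congruence $B\equiv 1\pmod{12}$ as this root is moved. Since $n=b^2$ with $(b,6)=1$, every $Q=[A,B,C]\in\sQ_n$ is split: the two roots of $Q(\tau,1)=0$ are $\frac{-B\pm b}{2A}\in\P^1(\Q)$, i.e.\ they are cusps of $\Gamma_0(6)$. The group $\Gamma^*$ generated by $\Gamma$ and the $W_r$ acts on $\sQ_n$, and by \eqref{eq:atkincompose} the quotient $\Gamma^*/\Gamma$ is represented by $\{W_1,W_2,W_3,W_6\}$, which maps $\infty$ to the four distinct cusps $\infty,\tfrac13,\tfrac12,0$ and hence acts simply transitively on the cusps of $\Gamma_0(6)$. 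Thus for a suitable $r\mid 6$ the form $W_rQ$ has a root at $\infty$, so $W_rQ=[0,\pm b,c]$ for some $c$, and after fixing the sign of the square root we may write $Q=W_r[0,b,c]$.

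Next I would pin down $r$ in terms of $b\bmod 12$ using the action $\gamma Q(x,y)=Q(dx-by,-cx+ay)$. A direct computation with the matrices $W_r$ gives
\begin{align*}
W_1[0,b,c]&=[0,\,b,\,c], & W_2[0,b,c]&=[6b+18c,\,-5b-12c,\,b+2c],\\
W_3[0,b,c]&=[-6b+12c,\,5b-12c,\,-b+3c], & W_6[0,b,c]&=[6c,\,-b,\,0].
\end{align*}
In each case the leading coefficient is divisible by $6$, and the middle coefficient is congruent to $b,\,7b,\,5b,\,11b\pmod{12}$, respectively. Imposing the defining congruence $B\equiv 1\pmod{12}$ of $\sQ_n$ therefore selects exactly one $r$ for each class of $b$ modulo $12$: since $\{1,7,5,11\}$ are precisely the units modulo $12$ and each is its own inverse, for every admissible $b$ there is a unique $r$ with $u_r b\equiv 1\pmod{12}$, where $u_r\in\{1,7,5,11\}$, giving the solutions $b\equiv 1,7,5,11$ for $r=1,2,3,6$, which is the table in the statement. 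The leading coefficient can be made positive by the choice of representative; for $r=1$ the representative $[0,b,c]$ is degenerate, with leading coefficient $0$, and is to be read as the label of the orbit whose distinguished root lies at $\infty$, an orbit which still meets $\sQ_n$ after a translation in $\Gamma$.

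It remains to show that $c\bmod b$ is a complete and non-redundant set of parameters. The translation $T=\pmatrix 1101\in\Gamma$ fixes $\infty$ and sends $[0,b,c]\mapsto[0,b,c-b]$, so $c$ is determined only modulo $b$; this gives a surjection from $\{c\bmod b\}$ onto $\Gamma\backslash\sQ_n$. For injectivity I would argue that if $W_r[0,b,c]$ and $W_r[0,b,c']$ lie in the same $\Gamma$-orbit then, conjugating by $W_r$, the forms $[0,b,c]$ and $[0,b,c']$ are equivalent under $W_r^{-1}\Gamma W_r$; any such equivalence either fixes $\infty$, in which case it is a power of $T$ and $c\equiv c'\pmod b$, or it interchanges the two roots, a case excluded (or shown to preserve the residue) by a short arithmetic check using $(b,6)=1$. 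Combining surjectivity and injectivity yields the asserted bijection.

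The main obstacle is the bookkeeping in the middle step: the non-transitivity of $\Gamma_0(6)$ on its four cusps is what forces the Atkin--Lehner correction, and it is precisely the congruence $B\equiv 1\pmod{12}$, rather than any geometric feature of $Q$, that breaks the symmetry between the two roots and singles out the correct $r$. Once the explicit formulas for $W_r[0,b,c]$ above are in hand, the remaining verifications, namely positivity of the leading coefficient, the $T$-reduction, and the exclusion of a root-swapping stabilizer, are routine and follow the same pattern as \cite[Lemma~3]{andersen-inf-geo}.
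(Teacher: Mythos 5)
Your proof is correct and takes essentially the approach the paper intends: the paper omits the argument entirely, saying it follows the same lines as \cite[Lemma~3]{andersen-inf-geo}, and your outline (move a root of $Q$ to $\infty$ using the fact that $W_1,W_2,W_3,W_6$ represent the four cusp classes of $\Gamma_0(6)$, pin down $r$ via the $\Gamma_0(6)$-invariance of the middle coefficient mod $12$, then reduce by translations), together with your correct formulas for $W_r[0,b,c]$ and the residues $b,7b,5b,11b\pmod{12}$, is exactly that adaptation to level $6$. One consolidating remark: the two verifications you defer are really a single computation, and what makes it work is orientation rather than $(b,6)=1$; namely, if an element of $\SL_2(\R)$ of determinant $1$ carries the finite root $-c/b$ of $[0,b,c]$ to $\infty$, a direct expansion shows the image form is $[0,-b,\ast]$, so a root-swapping equivalence between $[0,b,c]$ and $[0,b,c']$ is impossible (resolving your injectivity dichotomy in the ``excluded'' branch), and the same sign-flip shows that for any $Q\in\sQ_n$ exactly one of its two roots yields middle coefficient $+b$ when moved to $\infty$, which is what legitimizes the ``fixing the sign of the square root'' step in your surjectivity argument.
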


\begin{proof}[Proof of Proposition~\ref{prop:square-coeffs}]
From (5.4)  of \cite{andersen-singular}  and \eqref{eq:ansdef} we find that
\begin{equation}
	a(n,s) = \frac{\Gamma(2s)}{2\sqrt\pi \, \Gamma(s-\frac 14)} \sum_{Q\in \Gamma\backslash \sQ_n} \int_{C_Q} \tilde f_Q(\tau,2s-\tfrac 12) \, \frac{d\tau}{Q(\tau,1)}.
\end{equation}
Write $n=b^2$ and let $r\in \{1,2,3,6\}$ be as in Lemma~\ref{lem:square-disc-set}.
By Lemma~\ref{lem:square-disc-set}, \eqref{eq:fqtildef}, \eqref{eq:gammaQroots}, \eqref{eq:til-f-Q-W-tran}, and the fact that $\mu(r)=\chi_{12}(\sqrt n)$ we have
\begin{equation*}
	\sum_{Q\in \Gamma\backslash \sQ_{n}} \int_{C_Q} \tilde f_Q(\tau,2s-\tfrac 12) \, \mfrac{d\tau}{Q(\tau,1)}
	= \chi_{12}(\sqrt n)\sum_{c \bmod b} \int_{-\frac c b}^{-\frac c b + i\infty} \tilde f_{[0,b,c]}(\tau, 2s-\tfrac 12) \, \mfrac{d\tau}{b\tau+c}.
\end{equation*}
Let $g=(b,c)$, write $b=gb'$ and $c=gc'$, and
choose $\gamma_c = \pmatrix{u}{-c'}{6v}{b'} \in \Gamma_0(6)$.
Then $\gamma_c W_6 \infty=-\frac{c'}{b'}$.
We replace $\tau$ by $\gamma_c W_6 \tau$.  Since $\gamma_c W_6[0, -b', v]=[0,b',c']$,
\eqref{eq:gammaQroots} and \eqref{eq:gammaQdiff} give
\begin{equation*}
	\int_{-\frac{c'}{b'}}^{-\frac{c'}{b'}+\frac{i}{b'\sqrt 6}} \tilde f_{[0,b,c]}(\tau,2s-\tfrac 12) \, \mfrac{d\tau}{b\tau+c} = \mfrac{1}{g} \int_{\frac{v}{b'} + \frac{i}{b'\sqrt 6}}^{\frac{v}{b'}+i\infty} \tilde f_{[0,b,-v]}(\tau,2s-\tfrac 12) \, \mfrac{d\tau}{b'\tau-v}.
\end{equation*}
It follows that
\begin{multline} \label{eq:int-ell-n}
	\int_{-\frac c b}^{-\frac c b + i\infty} \tilde f_{[0,b,c]}(\tau, 2s-\tfrac 12) \, \mfrac{d\tau}{b\tau+c} \\
	= \mfrac 1b \int_{\frac1{b'\sqrt 6}}^\infty \left( \tilde f_{[0,b,c]}(-\tfrac {c'}{b'}+i y, 2s-\tfrac 12) + \tilde f_{[0,b,-v]}(\tfrac {v}{b'}+i y, 2s-\tfrac 12) \right) \, \mfrac{dy}{y}.
\end{multline}

The cusps $\mathfrak a_1,\mathfrak a_2$ associated to $Q=[0,b,c]$ are given by $\mathfrak a_1=\infty$ and $\mathfrak a_2=-\frac{c'}{b'}$, so we have
\begin{equation}
	\gamma_1 W_{r_1} = \pmatrix 1001 \quad \text{ and } \quad \gamma_2 W_{r_2} = \pmatrix {w}{*}{b'}{c'},
\end{equation}
for some $w\in\Z$.
Thus, by \eqref{eq:fqtildef} and \eqref{eq:f-tau-s-fourier} we have the Fourier expansion
\begin{multline}
	\tilde{f}_{[0,b,c]}(-\tfrac {c'}{b'}+iy,s)
	= a_s(0) y^{1-s} \\ - \mfrac{2\pi}{\sqrt{b'y}}I_{s-\frac 12}\pmfrac{2\pi}{b'y} e\left(-\mfrac w{b'}\right) + 2\sqrt y \sum_{n\neq 0} a_s(n) K_{s-\frac 12}(2\pi|n|y) e\left(-\mfrac{c'n}{b'}\right).
\end{multline}
The contribution from the constant term $a_s(0)y^{1-s}$ of $\tilde f_{[0,b,*]}(\tau,s)$ to the right-hand side of \eqref{eq:int-ell-n} equals
\begin{equation}\label{eq:maincont}
	\mfrac 2b \, a_{2s-\frac 12}(0) \int_{\frac 1{b'\sqrt 6}}^\infty y^{\frac 32-2s} \, \mfrac{dy}y = \mfrac 1b \, 6^{s-\frac 34}(b')^{2s-\frac 32} a_{2s-\frac 12}(0) \, \frac{1}{s-\frac 34}.
\end{equation}
By the estimates (4.4) and (4.5) of \cite{aa-spt}, which are valid uniformly for $s\in[\frac34,1]$, we have
\begin{equation}
	2\sqrt y \sum_{n\neq 0} a_{2s-\frac 12}(n) K_{2s-1}(2\pi|n|y) e\left(-\mfrac{c'n}{b'}\right) \ll e^{-\pi y} \qquad \text{ as }y\to\infty.
\end{equation}
Then, using that $I_s(y) \ll y^s$ uniformly for $s\in[\frac34,1]$ as $y\to0$, we conclude that
\begin{equation*}
	 f_{[0,b,c]}(-\tfrac{c'}{b'}+iy,2s-\tfrac 12) \ll y^{\frac 12-2s} \qquad \text{ as }y\to\infty.
\end{equation*}
It follows that the contribution from $f_{[0,b,*]}(\tau,s)$ to the right-hand side of \eqref{eq:int-ell-n} converges uniformly for $s\in[\frac 34,1]$.

Therefore
\begin{align*}
	\mfrac 1b \int_{\frac1{b'\sqrt 6}}^\infty &\left(  f_{[0,b,c]}(-\tfrac {c'}{b'}+i y, 2s-\tfrac 12) +  f_{[0,b,-v]}(-\tfrac {v}{b'}+i y, 2s-\tfrac 12) \right) \, \mfrac{dy}{y} \\*
	&= \mfrac 1b \int_{\frac1{b'\sqrt 6}}^\infty \left(  f_{[0,b,c]}(-\tfrac {c'}{b'}+i y) +  f_{[0,b,-v]}(-\tfrac {v}{b'}+i y) \right) \, \mfrac{dy}{y} + O(s-\tfrac 34) \\
	&= \int_{-\frac c b}^{-\frac c b + i\infty}  f_{[0,b,c]}(\tau) \, \mfrac{d\tau}{b\tau+c} + O(s-\tfrac 34),
\end{align*}
where, in the last line, we have reversed the calculations from above.
From \eqref{eq:csdef} and \eqref{eq:as0} we have
\[
	\frac{\Gamma(2s)}{2\sqrt\pi\,\Gamma(s-\frac 14)} 6^{s-\frac 34} c(s) a_{2s-\frac 12}(0) = 1.
\]
With \eqref{eq:maincont} and \eqref{eq:c34} this gives
\begin{equation*}
	c(s) a(n,s) = \frac{\chi_{12}(\sqrt n)}{\sqrt n} \!\! \sum_{c\bmod \sqrt n} \!\! \pfrac{\sqrt n}{(c,\sqrt n)}^{2s-\frac 32} \!\!\! \frac{1}{s-\frac 34} + \frac 1{12} \sum_{Q\in \Gamma\backslash \sQ_{n}} \int_{C_Q} f_Q(\tau) \, \mfrac{d\tau}{Q(\tau,1)} + O(s-\tfrac 34).
\end{equation*}
We have the series expansion $x^{2s-\frac 32} = 1 + 2\log x \,(s-\frac 34)+O(s-\frac 34)^2$.
Using  Lemma~\ref{lem:square-disc-set} and \eqref{eq:hurwitz} we see that for square $n$, we have
\begin{equation} \label{eq:hstar}
	h^*(n) = \frac 1{\pi} \sum_{\ell^2 \mid n} \log \sqrt{\frac {n}{\ell^2}} \sum_{\substack{[a,b,c]\in \Gamma\backslash\sQ_{n/\ell^2} \\ (a,b,c)=1}} \!\!\! 1 = \frac 1{\pi} \sum_{c\bmod \sqrt n} \log \pfrac{\sqrt n}{(c,\sqrt n)}.
\end{equation}
The proposition follows.
\end{proof}

We also require a short lemma.
\begin{lemma} \label{eq:lem-W-deriv-alpha}
If $n>0$ then
\begin{equation}
	\frac{\partial}{\partial s} \sW_n(y,s) \Big|_{s=\frac 34}
	= 4\pi e^{-\frac {\pi n y}{12}} \alpha\left(\mfrac {ny}6\right).
\end{equation}
\end{lemma}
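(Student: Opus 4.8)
The plan is to differentiate the integral representation \eqref{eq:whittaker_integral} in $s$ and evaluate at $s=\frac34$. Abbreviating $Y:=\frac{\pi n y}6$, the definition of $\sW_n$ together with \eqref{eq:whittaker_integral} gives, for $n>0$,
\[
	\sW_n(y,s)=Y^{-\frac14}W_{\frac14,s-\frac12}(Y)=\frac{e^{-Y/2}}{\Gamma(s-\frac14)}\,Y^{s-\frac14}\int_0^\infty e^{-Yt}t^{s-\frac54}(1+t)^{s-\frac34}\,dt.
\]
Since the factor $e^{-Y/2}$ is independent of $s$, I would split the $s$-dependent part into the prefactor $P(s):=Y^{s-\frac14}/\Gamma(s-\frac14)$ and the integral $I(s):=\int_0^\infty e^{-Yt}t^{s-\frac54}(1+t)^{s-\frac34}\,dt$, so that $\sW_n(y,s)=e^{-Y/2}P(s)I(s)$ and $\frac{\partial}{\partial s}\sW_n=e^{-Y/2}\bigl(P'(s)I(s)+P(s)I'(s)\bigr)$.

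The two derivatives are elementary. Logarithmic differentiation gives $P'(s)/P(s)=\log Y-\psi(s-\frac14)$, while differentiating under the integral sign brings down the factor $\log t+\log(1+t)$, so that $I'(s)=\int_0^\infty e^{-Yt}t^{s-\frac54}(1+t)^{s-\frac34}\bigl(\log t+\log(1+t)\bigr)\,dt$. At $s=\frac34$ one has $s-\frac14=\frac12$ and $s-\frac34=0$, so $P(\frac34)=\sqrt{Y/\pi}$, $\psi(\frac12)=-\gamma-2\log2$, and $I(\frac34)=\int_0^\infty e^{-Yt}t^{-\frac12}\,dt=\sqrt{\pi/Y}$ (which recovers the evaluation $\sW_n(y,\frac34)=e^{-Y/2}$ used earlier). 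Thus
\[
	I'(\tfrac34)=\int_0^\infty e^{-Yt}t^{-\frac12}\log t\,dt+\int_0^\infty e^{-Yt}t^{-\frac12}\log(1+t)\,dt.
\]

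To finish, the first integral is handled by differentiating the identity $\int_0^\infty e^{-Yt}t^{a-1}\,dt=\Gamma(a)Y^{-a}$ in $a$ at $a=\frac12$, which yields $\sqrt{\pi/Y}\,\bigl(\psi(\tfrac12)-\log Y\bigr)$; the second integral is exactly $\frac{4\pi}{\sqrt{ny/6}}\,\alpha\bigl(\frac{ny}6\bigr)$ by the definition \eqref{eq:alphadef} of $\alpha$, since $\pi\cdot\frac{ny}6=Y$. Assembling $P'(\frac34)I(\frac34)+P(\frac34)I'(\frac34)$, the contributions involving $\log Y$, $\gamma$, and $\log2$ cancel identically, leaving $4\pi\,\alpha\bigl(\frac{ny}6\bigr)$; multiplying by $e^{-Y/2}=e^{-\pi n y/12}$ then gives the claimed identity. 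The only point requiring real care is the justification of differentiation under the integral sign: near $t=0$ the factor $t^{s-\frac54}\log t$ is absolutely integrable for $s$ near $\frac34$ (as $s-\frac54>-1$), and near $t=\infty$ the exponential $e^{-Yt}$ dominates the slowly growing logarithms, so the differentiated integral converges locally uniformly in $s$. The remaining subtlety is purely bookkeeping: one must track the digamma and Euler-constant terms carefully so as to verify that they cancel.
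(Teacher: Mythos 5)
Your proof is correct and follows essentially the same route as the paper: both differentiate the Whittaker integral representation \eqref{eq:whittaker_integral} under the integral sign at $s=\tfrac34$, evaluate $\int_0^\infty e^{-Yt}t^{-1/2}\log t\,dt$ (which you derive from the Gamma integral rather than citing \cite[(4.352.1)]{GR-table}), observe that the digamma/$\log Y$ contributions cancel, and identify the surviving $\log(1+t)$ integral with $\alpha$ via \eqref{eq:alphadef}. The bookkeeping in your assembly of $P'(\tfrac34)I(\tfrac34)+P(\tfrac34)I'(\tfrac34)$ checks out exactly.
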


\begin{proof}
By \cite[(9.222.1)]{GR-table} we have the integral representation
\begin{equation*}
	W_{\frac 14,s-\frac 12}(y) = \frac{y^s e^{-\frac y2}}{\Gamma(s-\frac 14)} \int_0^\infty e^{-yt} t^{s-\frac 54} (1+t)^{s-\frac 34} \, dt.
\end{equation*}
Differentiating under the integral sign, we find that
\begin{multline*}
	\frac{\partial}{\partial s} W_{\frac 14,s-\frac 12}(y) \Big|_{s=\frac 34}
	= y^{\frac 14}e^{-\frac y2} \left( \log y - \psi\left(\mfrac 12\right) \right) \\ + \frac{y^{\frac 34}e^{-\frac y2}}{\sqrt \pi} \int_0^\infty e^{-yt} t^{-\frac 12} \log t \, dt
	+ \frac{y^{\frac 34}e^{-\frac y2}}{\sqrt \pi} \int_0^\infty e^{-yt} t^{-\frac 12} \log (1+t) \, dt,
\end{multline*}
where $\psi(z)=\frac{\Gamma'}{\Gamma}(z)$ is the digamma function.
By \cite[(4.352.1)]{GR-table} we have
\[
	\int_0^\infty e^{-yt} t^{-\frac 12} \log t \, dt = \sqrt{\frac \pi y} \, \left( \psi\left(\mfrac 12\right) - \log y \right),
\]
from which the lemma easily follows.
\end{proof}

We now prove Theorem~\ref{thm:h}.

\begin{proof}[Proof of Theorem~\ref{thm:h}]
After Proposition~\ref{prop:hdef}
it remains only to show that $\bh(\tau)$ has Fourier expansion \eqref{eq:h-exp}.
For the nonsquare coefficients, this follows from Proposition~\ref{prop:hdef} and \eqref{eq:a-n-34-neg}--\eqref{eq:a-n-34-pos}.
For square $n>0$, the $n$-th term of $\bh(\tau)$ is given by
\begin{equation} \label{eq:sq-coeff-lim}
	\mfrac 6\pi \lim_{s\to\frac 34^+} \left( c(s)a(n,s)\sW_n(y,s) - \mfrac{\chi_{12}(\sqrt n)}{s-\frac 34} e^{-\frac{\pi n y}{12}} \right) e_{24}(nx).
\end{equation}
By Lemma~\ref{eq:lem-W-deriv-alpha} we have the Taylor expansion
\begin{align*}
	\sW_n(y,s) &= \sW_n(y,\tfrac 34) + \mfrac{\partial}{\partial s} \sW_n(y,s) \big|_{s=\frac 34}(s-\tfrac 34) + O(s-\tfrac 34)^2 \\
		&= e^{-\frac{\pi n y}{12}} \left[ 1 + 4\pi\alpha\pmfrac{ny}{6}(s-\tfrac 34) + O(s-\tfrac 34)^2 \right].
\end{align*}
This, together with Proposition~\ref{prop:square-coeffs}, shows that the expression \eqref{eq:sq-coeff-lim} equals
\begin{equation*}
	\left( \mfrac{12}{\sqrt n} \, \chi_{12}(\sqrt n) h^*(n) + \Tr_n(f)+ 24 \, \chi_{12}(\sqrt n)\alpha\pmfrac{ny}{6}  \right) q^\frac n{24}.
\end{equation*}
Theorem~\ref{thm:h} follows.
\end{proof}

\section{Theorems \ref{thm:BFI} and \ref{thm:innprodlevel4}}\label{sec:DIT}
We briefly sketch how
 Theorem~\ref{thm:BFI} can be deduced from  Theorem~4.2 of \cite{Bruinier:2011}.
Let $L$ be the lattice of Example~2.1 of \cite{Bruinier:2011} with $N=1$, and let $h\in L'/L\cong \Z/2\Z$ denote the non-trivial element.
With $H_{h}(\tau,1)$ as in \cite[Theorem~4.2]{Bruinier:2011} we have the relation
\begin{equation}
	{\bm Z}(\tau) = \tfrac 12\left( H_L(4\tau,1) + H_{L+h}(4\tau,1) \right).
\end{equation}
We note that there are a few errors in \cite{Bruinier:2011} which should be corrected as follows.
First, the term $(\frac 12 \log 2 + \frac 14 \gamma)$ in Lemmas~8.5 and 8.6, and in the definition of $\mathcal F(t)$ in Theorem~4.1 should be changed to $(\log 2+\frac 12\gamma)$ (see (8.10)--(8.12) of \cite{Bruinier:2011}).
With the corrected definition of $\mathcal F(t)$, we have
\[
	\mathcal F(2\sqrt{\pi y} \, m) = -2\pi \alpha(4m^2y).
\]
In particular, Remark~4.3 no longer applies.
Second, the  constant terms in Theorem~4.2 (the first and last lines of the formula for $H_h(\tau,1)$) should be multiplied by $\delta_{h,0}$.

Theorem~\ref{thm:BFI} can also be deduced directly in analogy with  Sections~\ref{sec:def}--\ref{sec:square-index} from the definition of $\bm Z(\tau)$ as a limit.
Since this computation is quite involved, we give a sketch here.
Let
\begin{equation}
	c'(s) := \frac{2^{4s-1}\Gamma(s+\frac 14)\Gamma(s-\frac 14)^2 \zeta(2s-\frac 12)\zeta(4s-1)}{\pi^{s+\frac 34}\Gamma(2s-1)\zeta(4s-2)},
\end{equation}
and note that  $c'(\frac 34)=\frac{4\pi}{3}$.
With $P_0^+(\tau, s)$ as in Section~5 of \cite{DIT:CycleIntegrals}
we define
\begin{equation}
	\bm Z(\tau) := \frac{1}{4\pi} \lim_{s\to \frac 34} \left( c'(s) P_0^+(\tau,s) - \frac{\theta(\tau)}{s-\frac 34} \right).
\end{equation}
By (2.24) of \cite{DIT:CycleIntegrals}, the contribution from the constant term of $P_0^+(\tau,s)$ equals
\begin{equation}
	\mfrac{1}{4\pi} \lim_{s\to\frac 34} \left(   \frac{2^{2s-\frac 12}c'(s)b_0(0,s)}{(2s-1)\Gamma(2s-\frac 12)}y^{\frac 34-s} - \frac{1}{s-\frac 34}   \right) = \frac{\gamma-\log(16\pi y)}{4\pi}.
\end{equation}
For $d>0$ let $\fQ_d := \{[a,b,c]: b^2-4ac=d\}$ and let $\Gamma_1:=\PSL_2(\Z)$.
Let $b_0(d,s)$ denote the $d$-th coefficient of $P_0^+(\tau,s)$ (see \cite[(2.20--21)]{DIT:CycleIntegrals}).
For non-square $d$, the function $b_0(d,s)$ is analytic at $s=3/4$, so the coefficients of non-square index in ${\bm Z}(\tau)$ agree with the corresponding coefficients of $\widehat{\bm Z}_+(\tau)$.

Suppose that $d>0$ is a square.
By (4.5) of \cite{andersen-inf-geo} we have
\begin{equation} \label{eq:b-0-d-s-1}
	b_0(d,\tfrac s2+\tfrac 14) = \frac{2^{1-2s}\pi^{\frac{s+1}2}\Gamma(s)}{\zeta(s)\Gamma(\frac s2)^2} \sum_{Q\in \Gamma_1\backslash \fQ_d} \int_{C_Q} G_{0,Q}(\tau,s) \, \frac{\sqrt d \, d\tau}{Q(\tau,1)},
\end{equation}
where $G_{0,Q}(\tau,s)$ is a dampened version of the Eisenstein series $G_0(\tau,s)$ defined in \S 4 of \cite{DIT:CycleIntegrals}.
As in Lemma~\ref{lem:square-disc-set} we have
\begin{equation} \label{eq:fQ-square-reps}
	\Gamma_1\backslash \fQ_d \cong \{[0,b,c] : c\bmod b\} \qquad \text{ for $d=b^2$}.
\end{equation}
Following the proof of Proposition~\ref{prop:square-coeffs} above, we find that
\begin{equation} \label{eq:b-0-d-s-2}
	b_0(d,\tfrac s2+\tfrac 14) = \frac{2^{2-2s}\pi^{\frac{s+1}2} \Gamma(s) c_0(0,s)}{\zeta(s)\Gamma(\frac s2)^2} \sum_{c\bmod b} \pfrac{b}{(b,c)}^{s-1} \frac{1}{s-1} + O(s-1),
\end{equation}
where $c_0(0,s)=\frac{\sqrt \pi \, \Gamma(s-1/2)\zeta(2s-1)}{\Gamma(s)\zeta(2s)}$ is the coefficient of $y^{1-s}$ in $G_0(\tau,s)$.
We replace $s$ by $2s-\frac 12$ in \eqref{eq:b-0-d-s-2} and multiply by $c'(s)$.
Since
\[
	c'(s) \times \frac{2^{2-4s}\pi^{s+\frac 14} \Gamma(2s-\frac 12) c_0(0,2s-\frac 12)}{\zeta(2s-\frac12)\Gamma(s-\frac 14)^2} = 2 \, \Gamma(s+\tfrac 14),
\]
we find, using \eqref{eq:hstar}, that
\[
	c'(s) b_0(d,s) = \frac{2\sqrt d \, \Gamma(s+\tfrac 14)}{s-\frac 34} + 4\pi\Gamma(s+\tfrac 14)h^*(d) + O(s-\tfrac 34).
\]
Using the Taylor expansion
\begin{align*}
	(4\pi d y)^{-\frac 14} W_{\frac 14,s-\frac 12}(4\pi|d|y) e(dx)
	= q^d + 4\pi \alpha(4dy)q^d (s-\tfrac 34) + O\big((s-\tfrac 34)^2\big),
\end{align*}
we find that the $d$-th term in the Fourier expansion of ${\bm Z}(\tau)$ equals
\begin{multline*}
	\mfrac{1}{4\pi} \lim_{s\to\frac 34} \left( c'(s) b_0(d,s) d^{-\frac 12} \Gamma(s+\tfrac 14)^{-1} (4\pi y)^{-\frac 14} W_{\frac 14,s-\frac 12}(4\pi dy) e(dx) - \frac{2q^d}{s-\frac 34}\right)  \\
	=2\alpha(4dy)q^d + d^{-\frac 12}  h^*(d)q^d.
\end{multline*}
Theorem~\ref{thm:BFI} follows.

We turn to the proof of Theorem~\ref{thm:innprodlevel4}.
Recall  (see \cite{ZagierT} for example) that for each positive discriminant $d$  there exists a unique weight $\frac{3}{2}$ weakly holomorphic modular form $g_d$ on $\Gamma_0(4)$ of the form
\begin{equation}\label{eq:gd_exp}
g_d(\tau)= q^{-d}+\sum_{0\le n\equiv 0,3 (4)}B(d,n)q^n,
\end{equation}
where the $B(d,n)$ are integers and $B(d,0)=-2\delta_{\square}(d).$
The first three forms are
\[\begin{aligned}
g_1&=q^{-1}-2+248\, q^3-492\,q^4+4119\, q^7+\dots,\\
g_4&=q^{-4}-2-26752\,q^3-143376\,q^4-8288256\,q^7+\dots,\\
g_5&=q^{-5}+0+85995\,q^3-565760\,q^4+52756480\,q^7+\dots.\\
\end{aligned}\]
To follow the notation in \cite{Duke:2011a}, we define
\[g_0(\tau):=\widehat{\bm Z}_-(\tau).\]

Duke, Imamo\={g}lu and T\'{o}th \cite[Prop. 4.1]{Duke:2011a} proved that for any positive non-square discriminant $d$ we have
\begin{equation*}
\langle g_d,g_0\rangle_{\operatorname{reg}} = -\frac{3}{4}\frac{h^*(d)}{\sqrt{d}},
\end{equation*}
where
\[\langle g_d,g_0\rangle_{\operatorname{reg}}:=\lim_{Y\rightarrow\infty} \int_{\mathcal{F}_Y(4)} g_d(\tau)\overline{g_0(\tau)}y^\frac32\frac{dxdy}{y^2}\]
and
$\mathcal{F}_Y(4)$ is a  fundamental domain for $\Gamma_0(4)$ truncated by removing $Y$-neighborhoods of the cusps.

To define an inner product in the case when $d$ is square, we adopt the strategy in a recent paper of Bringmann, Diamantis, and Ehlen \cite{BDE}.
In that paper a general regularization of the inner product of two weakly holomorphic modular forms was given by introducing extra terms
in two auxiliary variables.  In this section and the next, we will adapt this strategy in only the generality we need; in particular we require only one of the
auxiliary variables for each application.

The language of vector valued forms is most convenient in this section. Suppose that $f(\tau)=\sum\limits_{n\equiv 0, 3 (4)} a(n)q^n$ is a weakly holomorphic modular form of weight $3/2$ on  $\Gamma_0(4)$ in the plus-space, and write $f(\tau)=f^e(\tau)+f^o(\tau)$, where these denote the sums over even and odd indices respectively.
To $f(\tau)$ we associate the vector valued form
\[\vec f(\tau):=f^e(\tau/4)\mathfrak e_0+f^o(\tau/4)\mathfrak e_1\]
Then $\vec f$ transforms in weight $3/2$ with respect to a certain representation of $\Mp_2(\Z)$ (see e.g. \cite[\S4.3]{BDE} for details).
Similarly, we  write
\[\vec g_d(\tau):=g_d^e(\tau/4)\mathfrak e_0+g_d^o(\tau/4)\mathfrak e_1\qquad\text{and}\qquad
\vec {\bm{Z}}(\tau):=\bm{Z}^e(\tau/4)\mathfrak e_0+\bm{Z}^o(\tau/4)\mathfrak e_1.\]

Suppose that $d>0$ is not square.  Let $\mathcal F_Y$ be the standard fundamental domain for $\SL_2(\Z)$, truncated at height $Y$.
A computation as in \cite[\S 4.3]{BDE}, using Lemma~3.2 of \cite{Duke:2011a} shows that
\begin{equation}\label{eq:inn_prod_rel}
\langle g_d,g_0\rangle_{\operatorname{reg}}=
\frac{3}{4}\lim_{Y\rightarrow\infty} \int_{\mathcal{F}_Y}\vec g_d(\tau)\cdot\overline{\vec g_0(\tau)}y^\frac32\frac{dxdy}{y^2}.
\end{equation}
(We note that the constant $\frac34$ appears incorrectly as $\frac32$ in the corresponding computation of \cite[\S 4.3]{BDE}; this arises from  the fact that the relationship
$\xi_\frac12\bm G=\bm g$ in that section should read $\xi_\frac12  \left( 2\bm G \right) =\bm g$.)

For any $d>0$,  we define
\begin{equation}\label{eq:Igdg0}
I(g_d,g_0;s):=\lim_{Y\rightarrow\infty} \int_{\mathcal{F}_Y}\vec g_d(\tau)\cdot\overline{\vec g_0(\tau)}y^{\frac 32-s}\frac{dxdy}{y^2}.
\end{equation}
We will show that $I(g_d, g_0; s)$ is defined for $\re (s)$ sufficiently large, and that it has a meromorphic continuation to a neighborhood of $s=0$.
We may therefore define the extended inner product
\begin{equation}\label{eq:innproddef}
\langle g_d,g_0\rangle_4 :=\underset{s=0}{\CT}(I(g_d,g_0;s))
\end{equation}
as the constant term in the Laurent expansion at $s=0$.  By \eqref{eq:inn_prod_rel}
we have
\[\langle g_d,g_0\rangle_4 =\frac43\,\langle g_d,g_0\rangle_{\operatorname{reg}}\quad\quad\text{if $d$ is not square;}\]
this also follows from the computations below.

To show that the definition makes sense,  we truncate at $y=1$ to obtain
\begin{equation*}
\int_{\mathcal{F}_Y}\vec g_d(\tau)\cdot\overline{\vec g_0(\tau)}y^{\frac 32-s}\frac{dxdy}{y^2} = \int_{\mathcal{F}_1}\vec g_d(\tau)\cdot\overline{\vec g_0(\tau)}y^{\frac 32-s}\frac{dxdy}{y^2}+\int_1^Y \int_{-\frac12}^{\frac12}\vec g_d(\tau)\cdot\overline{\vec g_0(\tau)}y^{\frac 32-s}\frac{dxdy}{y^2}.
\end{equation*}
Using  Fourier expansions  and integrating term by term,  the second integral becomes
\begin{multline}\label{eq:inn_prod_justify}
\int_1^Y \sum_{n>0} H(n)B(d, n)e^{- \pi n y}y^{-\frac12-s}\,  dy
-\frac{\delta_\square(d)\sqrt{d}}4\int_1^Y\beta_\frac32(\pi d y)e^{\pi d y}y^{-\frac12-s}\,  dy\\
+\delta_\square(d)\int_1^Y \left( \mfrac16-\mfrac1{2\pi\sqrt y}  \right) y^{-\frac12-s}\,  dy.
\end{multline}
If $\{c(n)\}$ are the coefficients of a weakly holomorphic modular form or a mock modular form, then by
\cite[Lemma 3.4]{Bruinier:2004} we have the estimate
\begin{equation}\label{eq:coeff_est}
c(n)\ll e^{C\sqrt{n}}\ \ \text{for some  $C$ as $n\to\infty$}.
\end{equation}
By \cite[(8.11.2)]{NIST:DLMF} we have
\begin{equation}\label{eq:beta_est}
 \beta_k(y)\ll y^{-k}e^{-y} \ \ \text{as $y\to\infty$.}
\end{equation}
We also have  the crude estimate $H(n)\ll n^{1+\epsilon}$.

It follows that the integral defining $I(g_d, g_0; s)$ converges for  $\re (s)>\frac 12$.
In the region of convergence we have
\begin{multline}\label{eq:innprodcomp}
I(g_d, g_0; s)=
\lim_{Y\rightarrow\infty} \left[\int_{\mathcal{F}_Y}\vec g_d(\tau)\cdot\overline{\vec g_0(\tau)}y^{\frac 32-s}\frac{dxdy}{y^2}
-\delta_\square(d)\int_1^Y \left( \mfrac16-\mfrac1{2\pi\sqrt y}  \right) y^{-\frac12-s}\,  dy\right]\\
+\delta_\square(d)\int_1^\infty \left( \mfrac16-\mfrac1{2\pi\sqrt y}  \right) y^{-\frac12-s}\,  dy.
\end{multline}
By the discussion above, the first term is holomorphic in a neighborhood of $s=0$. The second term has a meromorphic continuation to $s=0$.
This justifies the definition \eqref{eq:innproddef}, and shows that we have
\begin{equation}\label{E:def2}
\langle g_d,g_0\rangle_4 = \lim_{Y\rightarrow\infty}\left[\int_{\mathcal{F}_Y}\vec g_d(\tau)\cdot\overline{\vec g_0(\tau)}y^\frac{3}{2}\frac{dxdy}{y^2}
-\delta_\square(d) \left( \mfrac{\sqrt{Y}-1}{3}-\mfrac{\log Y}{2\pi} \right) 
\right]
-\frac{\delta_\square(d)}3.
\end{equation}

Since $\xi_{\frac{1}{2}}{\bm Z}=-2g_0$, we have
\[
\xi_{\frac{1}{2}}{\bm Z}^{\rm e}(\tau/4)=-g_0^{\rm e}(\tau/4), \qquad \xi_{\frac{1}{2}}{\bm Z}^{\rm o}(\tau/4)=-g_0^{\rm o}(\tau/4).\]
If $g$ is holomorphic, then by Stokes' theorem (see, e.g., \cite[Lemma 3.2]{Duke:2011a}, noting that the identity there should read
$d\tau d\overline\tau=-2i\,dx dy$)
 we have
\begin{equation}\label{eq:stokes}
\int_{\mathcal F_Y}g(\tau)h_{\overline\tau}(\tau)\, d\tau d\overline\tau=-\int_{\partial\mathcal{F}_Y} g(\tau)h(\tau)\, d\tau.
\end{equation}
Therefore
\begin{align*}
\int_{\mathcal{F}_Y}\vec g_d(\tau)\cdot\overline{\vec g_0(\tau)}y^\frac{3}{2}\frac{dxdy}{y^2}
&=-\int_{\mathcal{F}_Y}\left(g_d^{\rm e}\left(\mfrac{\tau}{4}\right)\overline{\xi_{\frac{1}{2}}{\bm Z}^{\rm e}\left(\mfrac{\tau}{4}\right)}+g_d^{\rm o}\left(\mfrac{\tau}{4}\right)
\overline{\xi_{\frac{1}{2}}{\bm Z}^{\rm o}\left(\mfrac{\tau}{4}\right)}\right)y^{-\frac{1}{2}}dxdy\\
&=\int_{\partial\mathcal{F}_Y}\left(g_d^{\rm e}\left(\mfrac{\tau}{4}\right){\bm Z}^{\rm e}\left(\mfrac{\tau}{4}\right)+g_d^{\rm o}\left(\mfrac{\tau}{4}\right){\bm Z}^{\rm o}\left(\mfrac{\tau}{4}\right)
\right)d\tau\\
&=-\int_{-\frac{1}{2}+iY}^{\frac{1}{2}+iY}\left(g_d^{\rm e}\left(\mfrac{\tau}{4}\right){\bm Z}^{\rm e}\left(\mfrac{\tau}{4}\right)+g_d^{\rm o}\left(\mfrac{\tau}
{4}\right){\bm Z}^{\rm o}\left(\mfrac{\tau}{4}\right)\right)d\tau.
\end{align*}
Using the Fourier expansions \eqref{eq:BFI} and \eqref{eq:gd_exp}, we find that
 \begin{multline}\label{eq:up_to_Y}
\int_{\mathcal{F}_Y}\vec g_d(\tau)\cdot\overline{\vec g_0(\tau)}y^\frac{3}{2}\frac{dxdy}{y^2}\\
=
-\frac{h^*(d)}{\sqrt{d}}+\delta_\square(d) \left( \mfrac{\sqrt{Y}}{3}+\mfrac{\gamma-\log 4\pi Y}{2\pi}-\alpha(dY) \right) 
 -\sum_{n>0}B(d,n)\frac{h^*(-n)}{\sqrt{n}}\beta_\frac12(\pi nY).
\end{multline}

Since  $e^{-x}\le x^{-1}$  for all $x>0$, we have $\alpha(Y)\to0$ as $Y\to\infty$.
By  \eqref{E:def2}, \eqref{eq:up_to_Y},  \eqref{eq:coeff_est}, and \eqref{eq:beta_est},
we have
\[\langle g_d,g_0\rangle_4=
-\frac{h^*(d)}{\sqrt{d}}+\delta_\square(d) \frac{\gamma-\log 4\pi}{2\pi},\]
and Theorem~\ref{thm:innprodlevel4} follows.

\section{Regularized Inner products for $\SL_2(\Z)$} \label{sec:inn-prod}
Here we prove an analogue of Theorem~\ref{thm:innprodlevel4} for $\SL_2(\Z)$.
Let $F$ be the harmonic Maass form of weight $3/2$ and multiplier $\bar\chi$ on $\SL_2(\Z)$ defined in \eqref{eq:fdef}.  We also introduce a natural infinite family of weakly holomorphic modular forms.

\begin{lemma}\label{L:gd}
For any integer $d>1$ with $d\equiv 1 \mod 24$, there exists a unique weight $\frac{3}{2}$ weakly holomorphic modular form $h_d$ on $\SL_2(\Z)$ with multiplier $\bar{\chi}$ such that
\[h_d(\tau) =q^{-\frac{d}{24}}+\sum_{-1\le n \equiv 23 (24)}A(d,n)q^{\frac{n}{24}} .\]
Furthermore, we have $A(d,-1)=- \chi_{12}(\sqrt{d}).$
\end{lemma}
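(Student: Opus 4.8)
The plan is to reduce to integral weight $2$ by multiplying by $\eta$. Since $\eta$ transforms in weight $\tfrac12$ with multiplier $\chi$ and is nonvanishing on $\H$, the map $h\mapsto h\eta$ is a linear isomorphism from the space of weight-$\tfrac32$ weakly holomorphic forms on $\SL_2(\Z)$ with multiplier $\bar\chi$ onto the space $M_2^!$ of weight-$2$ weakly holomorphic modular forms with trivial multiplier on $\SL_2(\Z)$. Writing $m:=(d-1)/24\in\Z_{\ge1}$ (an integer since $d\equiv1\pmod{24}$ and $d>1$), the expansion $\eta=q^{1/24}\prod_{n\ge1}(1-q^n)$ shows that any $h_d$ of the required form satisfies $h_d\eta=q^{-m}+O(q^{-m+1})\in M_2^!$, so I will produce $h_d$ by first constructing its image in $M_2^!$ and then dividing by $\eta$.

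For uniqueness, suppose $h,h'$ both satisfy the conclusion. Their difference has all exponents $\ge-\tfrac1{24}$ (the leading terms $q^{-d/24}$ cancel), so $(h-h')\eta$ lies in $M_2^!$ and is holomorphic at $\infty$; since there are no nonzero holomorphic modular forms of weight $2$ on $\SL_2(\Z)$, we get $(h-h')\eta=0$ and hence $h=h'$. For existence I will invoke the structure of $M_2^!$: for every $m\ge1$ there is a unique form $f_m=q^{-m}+O(q)\in M_2^!$, and these span all admissible principal parts. Setting $p_\eta(j):=[q^j]\prod_{n\ge1}(1-q^n)$, I put $G_d:=\sum_{j=0}^{m-1}p_\eta(j)\,f_{m-j}$, so that $G_d$ has principal part $\sum_{j=0}^{m-1}p_\eta(j)q^{j-m}$ and, by the residue theorem, vanishing constant term. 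Then $h_d:=G_d/\eta$ is weakly holomorphic of weight $\tfrac32$ with multiplier $\bar\chi$. To confirm the exact shape I write $h_d=q^{-d/24}\sum_{L\ge0}\big(\sum_{\ell+k=L}c_\ell\,p(k)\big)q^{L}$, where $G_d=\sum_\ell c_\ell q^{\ell-m}$ and $1/\prod_{n\ge1}(1-q^n)=\sum_k p(k)q^k$; since $c_\ell=p_\eta(\ell)$ for $0\le\ell\le m-1$, the identity $\prod_{n\ge1}(1-q^n)\cdot\sum_k p(k)q^k=1$ forces the coefficient of $q^{L}$ to vanish for $1\le L\le m-1$, which is exactly the assertion that $h_d$ has no terms strictly between $q^{-d/24}$ and $q^{-1/24}$.

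Finally I will compute $A(d,-1)$ from the vanishing of the constant term of the weight-$2$ form $h_d\eta=G_d$. All exponents of $\eta$ are positive, and the only negative exponents of $h_d$ are $-\tfrac d{24}$ (coefficient $1$) and $-\tfrac1{24}$ (coefficient $A(d,-1)$), so matching to exponent $0$ gives
\[0=[q^0](h_d\eta)=[q^{d/24}]\eta+A(d,-1)\,[q^{1/24}]\eta=[q^{d/24}]\eta+A(d,-1).\]
By the pentagonal number theorem $\eta=\sum_{k\in\Z}(-1)^kq^{(6k-1)^2/24}$, so $[q^{d/24}]\eta$ equals $(-1)^k$ when $d=(6k-1)^2$ and $0$ otherwise; a short check modulo $12$ (using $\sqrt d\equiv\pm1\pmod 6$, forced by $d\equiv1\pmod{24}$) identifies this value with $\chi_{12}(\sqrt d)$, with the convention $\chi_{12}(\sqrt d)=0$ when $d\ne\square$. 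This yields $A(d,-1)=-\chi_{12}(\sqrt d)$.

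The isomorphism and the coefficient bookkeeping are routine; the step that needs genuine care is confirming the precise shape of the expansion, i.e.\ the vanishing of the intermediate coefficients via the convolution identity. I expect the cleanest route to $A(d,-1)$ to be the residue/pairing identity above rather than extracting the $L=m$ coefficient of $h_d$ directly.
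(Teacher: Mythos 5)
Your proof is correct, and it shares the paper's key mechanism: both arguments pass to weight $2$ by multiplying by $\eta$, and both extract $A(d,-1)=-\chi_{12}(\sqrt d)$ from the fact that $h_d\eta$ is a weakly holomorphic form of weight $2$ on $\SL_2(\Z)$, whose constant term must vanish. Where you differ is the existence construction. The paper seeds the family with the explicit form $h_{25}=-\Theta(j)/\eta$ and obtains each $h_d$ by multiplying by a ``suitable'' polynomial in $j$, leaving unverified the cancellation of the intermediate coefficients and leaving the final constant-term computation to the reader; you instead prescribe the principal part in weight $2$, setting $G_d=\sum_{\ell=0}^{m-1}p_\eta(\ell)\,f_{m-\ell}$ in the standard basis $f_m=q^{-m}+O(q)$ of $M_2^!$, and your convolution identity $\prod_{n\ge1}(1-q^n)\cdot\sum_k p(k)q^k=1$ is precisely what makes the gap in the expansion of $G_d/\eta$ (no terms strictly between $q^{-d/24}$ and $q^{-1/24}$) transparent. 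You also make explicit two points the paper leaves implicit: uniqueness via $M_2(\SL_2(\Z))=\{0\}$, and the evaluation $[q^{d/24}]\eta=\chi_{12}(\sqrt d)$ via the pentagonal number theorem together with the congruence check on $\sqrt d \bmod 12$. What the paper's route buys is concrete closed forms (e.g.\ $h_{49}=(j-745)h_{25}$) that it uses to display the first few members of the family; what your route buys is a uniform, fully checked argument valid for all $d$ simultaneously, with no unspecified polynomial. Since both rest on the same structural facts (nonvanishing of $\eta$ on $\H$ and the structure of weight-$2$ weakly holomorphic forms on $\SL_2(\Z)$), the two proofs are equivalent in substance.
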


\begin{proof}
Leting $j(\tau)$ be the usual $j$-invariant and defining
\[\Theta:=\frac{1}{2\pi i}\frac{d}{d\tau}=q\frac{d}{dq},\]
we have
\begin{equation*}
h_{25}(\tau)=-\frac{\Theta(j(\tau))}{\eta(\tau)}=q^{-\frac{25}{24}}+q^{-\frac{1}{24}}-196882q^{\frac{23}{24}}-\cdots.\\
\end{equation*}
The subsequent forms $h_d(\tau)$ are constructed by multiplying $h_{25}(\tau)$ by a suitable element of $\mathbb{C}[j].$ For instance, the next two forms are
\begin{align*}
h_{49}(\tau)&=(j(\tau)-745)h_{25}(\tau) = q^{-\frac{49}{24}}+q^{-\frac{1}{24}}-21296875q^{\frac{23}{24}}-\cdots,\\
h_{73}(\tau)&= (j(\tau)^2 - 1489j(\tau) + 357395)h_{25}(\tau) =q^{-\frac{73}{24}}-842609326q^{\frac{23}{24}}-\cdots.
\end{align*}
The remaining claim follows from the fact that each $h_d(\tau)\eta(\tau)$ is a weakly holomorphic modular form of weight $2$ on $\SL_2(\Z)$.
\end{proof}

We  define in \eqref{eq:IPdef} an inner product $\langle\cdot, \cdot\rangle_1$ which extends the natural regularized inner product, and
we will prove
\begin{theorem}\label{T:RIP}
Let $f$ be the modular function on $\Gamma_0(6)$ defined in \eqref{E:f}.
Then for each positive integer $d\equiv 1 \bmod 24$, we have
\begin{equation*}
\sqrt{24}\,\langle h_d, F\rangle_1 =- \Tr_d(f)+\chi_{12}(\sqrt{d})\left(\Tr_1(f)-12 \frac{h^*(d)}{\sqrt{d}}-i\right).
\end{equation*}
\end{theorem}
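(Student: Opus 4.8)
The plan is to mirror the proof of Theorem~\ref{thm:innprodlevel4} from Section~\ref{sec:DIT}, replacing the triple $(g_d,\bm Z,g_0)$ by $(h_d,\bh,F)$ and using the relation $\xi_{\frac12}\bh=-2\sqrt6\,F$ from Theorem~\ref{thm:h} in place of $\xi_{\frac12}\bm Z=-2g_0$. Writing $F=-\tfrac1{2\sqrt6}\xi_{\frac12}\bh$ and unwinding $\xi_{\frac12}=2iy^{1/2}\overline{\partial/\partial\bar\tau}$, I would first record the identity
\[
	h_d(\tau)\overline{F(\tau)}\,y^{\frac32}\,\frac{dx\,dy}{y^2}=-\frac{1}{2\sqrt6}\,h_d(\tau)\,\frac{\partial\bh}{\partial\bar\tau}(\tau)\,d\tau\,d\bar\tau,
\]
which exhibits the integrand (up to the constant $-\tfrac1{2\sqrt6}$) as $d$ of the $1$-form $h_d\,\bh\,d\tau$. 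Since $h_d$ carries the multiplier $\bar\chi$ and $\bh$ carries $\chi$, the product $h_d\bh$ has trivial multiplier and weight $2$, so that $h_d(\tau)\bh(\tau)\,d\tau$ is invariant under $\SL_2(\Z)$.

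Second, I would apply Stokes' theorem \eqref{eq:stokes} to the truncated standard fundamental domain $\mathcal F_Y$. By the invariance just noted, the two vertical edges and the two arcs of the bottom boundary cancel in pairs, leaving only the horizontal segment at height $Y$, so the regularized integral reduces (again up to the counterterms built into the definition \eqref{eq:IPdef}) to $-\tfrac1{2\sqrt6}\int_{-1/2}^{1/2}h_d(x+iY)\bh(x+iY)\,dx$. I would then insert the Fourier expansions of $h_d$ (Lemma~\ref{L:gd}) and of $\bh$ \eqref{eq:h-exp} and integrate in $x$, which selects the diagonal pairs whose frequencies cancel. Because every exponent of $h_d$ is $\equiv-1\pmod{24}$, every exponent of $\bh$ is $\equiv1\pmod{24}$, and $h_d$ has only the single principal term $q^{-d/24}$ together with terms $q^{n/24}$ for $n\ge-1$, exactly three families survive: the term $q^{-d/24}$ paired with the index-$d$ coefficient of $\bh$, the term $A(d,-1)q^{-1/24}$ paired with the index-$1$ coefficient, and the positive-index terms of $h_d$ paired with the negative-index $\beta_{\frac12}$ coefficients of $\bh$. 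The last family tends to $0$ as $Y\to\infty$ since $\beta_{\frac12}$ of a positive argument decays; the first yields $\Tr_d(f)$ and, for square $d$, also $\tfrac{12\chi_{12}(\sqrt d)}{\sqrt d}h^*(d)$ (the accompanying $\alpha$-term vanishing in the limit); and the second, using $A(d,-1)=-\chi_{12}(\sqrt d)$ from Lemma~\ref{L:gd} and $h^*(1)=0$, contributes the finite part $\chi_{12}(\sqrt d)\bigl(i-\Tr_1(f)\bigr)$. Assembling these with the constant $-\tfrac1{2\sqrt6}$ and the factor $\sqrt{24}=2\sqrt6$ reproduces exactly the claimed formula.

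The main obstacle, and the point where the regularization of \cite{BDE} is essential, is the index-$1$ pairing. The coefficient of $q^{1/24}$ in $\bh$ contains the non-holomorphic piece $i\beta_{\frac12}(-\pi y/6)$, whose argument is negative; hence $\beta_{\frac12}(-\pi Y/6)$ grows \emph{exponentially} in $Y$ rather than polynomially. Unlike the level-$4$ situation of \eqref{E:def2}, where only the powers $\sqrt Y$ and $\log Y$ had to be stripped away, here a genuinely exponentially growing boundary term $-i\chi_{12}(\sqrt d)\beta_{\frac12}(-\pi Y/6)$ appears and must be removed. I would therefore devote the most care to formulating the definition \eqref{eq:IPdef} of $\langle\cdot,\cdot\rangle_1$ through a single auxiliary variable, following the strategy of \cite{BDE}, so that the defining integral converges in a half-plane, admits meromorphic continuation, and has this growing contribution cancelled; proving the convergence and continuation — via Fourier-coefficient estimates as in \eqref{eq:inn_prod_justify}--\eqref{eq:innprodcomp}, now combined with the bounds of Lemma~\ref{lem:ans} and the asymptotics of $\beta_{\frac12}$ — is the technical heart of the argument. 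Once this regularization is in place, extraction of the constant term isolates precisely the finite part computed above, and the formula follows.
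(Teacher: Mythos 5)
Your proposal is correct and follows essentially the same route as the paper: the identity $\sqrt{24}\,F=-\xi_{1/2}\bh$ plus Stokes' theorem \eqref{eq:stokes} reduces the truncated integral to $-\tfrac{1}{\sqrt{24}}\int_{-1/2+iY}^{1/2+iY}h_d(\tau)\bh(\tau)\,d\tau$, the Fourier pairing produces exactly the three families you list (with the signs assembling to the stated formula), and the regularization is handled, as you anticipate, by a single auxiliary variable with an exponential damping factor $e^{-wy}$ — precisely because the boundary term $i\chi_{12}(\sqrt d)\,\beta_{1/2}(-\pi Y/6)$ grows exponentially, which is why the paper's definition of $\langle\cdot,\cdot\rangle_1$ differs from the power-law regulator used at level $4$. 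The only part you defer — verifying that the finite constant $i\chi_{12}(\sqrt d)\,\beta_{1/2}(-\pi/6)$ surviving from the growing $\beta$-term cancels against the constant built into the continued value $I(h_d,F;0)$ — is carried out in the paper via the expansion $\beta_{1/2}(-\pi Y/6)-1=\beta_{1/2}(-\pi/6)-1-\tfrac{i}{\sqrt6}\int_1^Y y^{-1/2}e^{\pi y/6}\,dy$, and works out exactly as your sketch assumes.
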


The usual  regularized inner product of $h_d$ and $F$ is given by
\[\langle h_d, F\rangle_{\operatorname{reg}} =\lim_{Y\rightarrow \infty}\int_{\mathcal{F}_Y} h_d(\tau)\overline{F(\tau)}y^\frac32 \frac{dx dy}{y^2}.\]
The computation below will show that $\langle h_d, F\rangle_{\operatorname{reg}}$ exists if and only if $d$ is not square, and that
\[\sqrt{24}\,\langle h_d, F\rangle_{\operatorname{reg}}=-\Tr_d(f)\quad\quad\text{if $d$ is not square}.\]
We again adopt the strategy from \cite{BDE} to extend the inner product to the case when $d$ is square.  Since there are exponentially growing terms in this case (see   below) the extension differs from that of the last section.

Define
\begin{equation*}
I(h_d,F;w):=\int_{\mathcal{F}}h_d(\tau)\overline{F(\tau)}y^{\frac{3}{2}}e^{-wy} \frac{dxdy}{y^2}.
\end{equation*}
The integral converges when $\re w\gg 0$.
We will show that there is an analytic continuation to $w=0$; we then define
\begin{equation}\label{eq:IPdef}
\langle h_d,F\rangle_1 := I(h_d,F;0).
\end{equation}

For $\re w\gg 0$, we have
\begin{equation*}
I(h_d,F;w)=\lim_{Y\rightarrow \infty}\left(\int_{\mathcal{F}_1}h_d(\tau)\overline{F(\tau)}y^{-\frac{1}{2}}e^{-wy}dxdy +\int_{1}^Y\int_{-\frac12}^\frac12 h_d(\tau)\overline{F(\tau)}y^{-\frac{1}{2}}e^{-wy}dxdy \right).
\end{equation*}
Integrating
 term by term (note that $s(0)=-\frac1{12}$) yields
\begin{multline*}
\int_1^Y\int_{-\frac12}^\frac12 h_d(\tau)\overline{F(\tau)}y^{-\frac{1}{2}}e^{-wy}dxdy \\= \frac{\chi_{12}(\sqrt{d})}{12}\int_1^Y y^{-\frac{1}{2}}e^{(\frac{\pi}{6}-w)y}dy -\frac{\chi_{12}(\sqrt{d})\sqrt{d}}{2}\int_1^Y y^{-\frac{1}{2}}e^{(\frac{d\pi}{6}-w)y}\beta_{\frac{3}{2}}\pmfrac{\pi dy}6 dy \\
 + \frac{\chi_{12}(\sqrt{d})}{2}\int_1^Y y^{-\frac{1}{2}}e^{(\frac{\pi}{6}-w)y}\beta_{\frac{3}{2}}\pmfrac{\pi y}6 dy
 +\sum_{n>0} A(d,n)s\pmfrac{n+1}{24}\int_1^Y y^{-\frac{1}{2}}e^{(-\frac{\pi n}{6}-w)y}dy.
\end{multline*}
By \eqref{eq:coeff_est} and \eqref{eq:beta_est},
we see that all but the first integral on the right side converge absolutely on $\re w\ge 0$ as $Y\rightarrow\infty$.

For $\re w\gg 0$, we have
\begin{align*}
I(h_d,F;w)&=\lim_{Y\rightarrow \infty}\left(\int_{\mathcal{F}_Y}h_d(\tau)\overline{F(\tau)}y^\frac{3}{2}e^{-wy} \frac{dxdy}{y^2}-\frac{\chi_{12}(\sqrt{d})}{12}\int_1^Y y^{-\frac{1}{2}}e^{(\frac{\pi}{6}-w)y}dy\right)\\
&\qquad +\frac{\chi_{12}(\sqrt{d})}{12} \int_1^\infty y^{-\frac12}e^{(\frac{\pi}{6}-w)y}dy.
\end{align*}
Using \eqref{eq:betadef}, the last term is
\[\frac{\chi_{12}(\sqrt{d})}{12}\frac{\sqrt\pi}{\sqrt{w-\frac\pi6}} \beta_\frac12 \left( w-\frac\pi6 \right) ,\]
so   we have
\begin{multline}\label{E:RIP2}
\langle h_d,F\rangle_1=I(h_d,F;0)=\lim_{Y\rightarrow \infty}\left(\int_{\mathcal{F}_Y}h_d(\tau)\overline{F(\tau)}y^\frac{3}{2} \frac{dxdy}{y^2}-\frac{\chi_{12}(\sqrt{d})}{12}\int_1^Y y^{-\frac{1}{2}}e^{\frac{\pi y}{6}}dy\right)
\\-i\, \frac{\chi_{12}(\sqrt{d})}{\sqrt{24}} \beta_{\frac{1}{2}}\left(- \mfrac{\pi}6\right).
\end{multline}

We turn to the proof of Theorem~\ref{T:RIP}.
Since $-\sqrt{24}F(\tau)=\xi_{\frac{1}{2}}\bh(\tau)$, arguing as above using
\eqref{eq:stokes} gives

\begin{equation*}
\int_{\mathcal{F}_Y} h_d(\tau)\overline{F(\tau)}y^{\frac{3}{2}} \frac{dx dy}{y^2}
= \frac{1}{\sqrt{24}}\int_{\partial \mathcal{F}_Y}h_d(\tau)\bh(\tau) d\tau
= \frac{-1}{\sqrt{24}}\int_{-\frac{1}{2}+iY}^{\frac{1}{2}+iY}h_d(\tau)\bh(\tau) d\tau.
\end{equation*}
Integrating term by term gives
\begin{multline}\label{E:IP1}
\sqrt{24}\int_{\mathcal{F}_Y} h_d(\tau)\overline{F(\tau)}y^\frac32 \frac{dx dy}{y^2} =
-\Tr_d(f)+\chi_{12}(\sqrt d ) \left(  \Tr_1(f)-12\mfrac{h^*(d)}{\sqrt d} \right)  \\*
+i\chi_{12}(\sqrt{d}) \left( \beta_{\frac 12}\pmfrac{-\pi Y}6-1 \right) -24\chi_{12}(\sqrt{d}) \left( \alpha\pmfrac{dY}6-\alpha\pmfrac Y6 \right)  \\
-\sum_{n>0}A(d,n)\mfrac{\Tr_{-n}(f)}{\sqrt{n}}\beta_{\frac 12}\pmfrac{\pi nY}6.
\end{multline}
Using  \eqref{eq:beta_to_gamma} and \eqref{eq:gammastardef}, we find that
\begin{equation}\label{eq:betarel}
\beta_{\frac 12}\pmfrac{-\pi Y}6-1
= \beta_{\frac12}\pmfrac{-\pi}{6}-1 - \frac{i}{\sqrt{6}}\int_1^Y y^{-\frac12} e^{\frac{\pi y}{6}}dy.
\end{equation}
Theorem~\ref{T:RIP} follows from \eqref{E:RIP2}, \eqref{E:IP1}, and \eqref{eq:betarel}.

In closing, we remark that by generalizing these arguments it would be possible to investigate inner products of larger families of forms
in the spirit of \cite{BDE} and  \cite{Duke:2011a}.

\bibliographystyle{plain}
\bibliography{polyharmonic}

\end{document}